\newtheorem{Theorem}{Theorem}[section]
\newtheorem*{Theorem A}{Theorem A}
\newtheorem*{Theorem A'}{Theorem A'}
\newtheorem*{Conj*}{Conjecture}
\newtheorem{Definition}[Theorem]{Definition}
\newtheorem{Proposition}[Theorem]{Proposition}
\newtheorem{Lemma}[Theorem]{Lemma}
\newtheorem{Remark-numbered}[Theorem]{Remark}
\newtheorem{Remarks-numbered}[Theorem]{Remarks}
\newtheorem*{Claim}{Claim}
\newtheorem*{Theorem B'}{Theorem B'}
\newtheorem{Claim-numbered}{Claim}
\title[No-shadowing for singular hyperbolic sets with a singularity] %Use the shortened version of the full title
      {No-shadowing for singular hyperbolic sets with a singularity}
\author[Xiao Wen and Lan Wen]{}
\subjclass{Primary: 37C10, 37D30.}
 \keywords{shadowing property, singular hyperbolicity, star flows, multisingular hyperbolicity.}
 \email{wenxiao@buaa.edu.cn}
 \email{lwen@math.pku.edu.cn}
\thanks{The first author is supported by National Natural Science
Foundation of China (No. 11671025 and No. 11571188)
and the Fundamental Research Funds for the Central Universities.
The second author is
supported by National Natural Science
Foundation of China (No. 11231001). }
\thanks{}
\begin{document}
\maketitle

% Enter the first author's name and address:
\centerline{\scshape Xiao Wen}
\medskip
{\footnotesize
% please put the address of the first author
 \centerline{School of Mathematical Sciences}
   \centerline{Beihang University, Beijing, 100191, China}
} % Do not forget to end the {\footnotesize by the sign }

\medskip

\centerline{\scshape Lan Wen }
\medskip
{\footnotesize
 % please put the address of the second  and third author
 \centerline{School of Mathematical Sciences}
   \centerline{Peking University, Beijing, 100871, China}
}

\bigskip

% The name of the associate editor will be entered by an editorial staff
% "Communicated by the associate editor name" is not needed for special issue.

%The abstract of your paper
\begin{abstract}
We prove that every singular hyperbolic chain transitive set with a singularity does not admit the shadowing property.  Using this result we show that if a star flow has the shadowing property on its chain recurrent set then it satisfies Axiom A and the no-cycle conditions; and that if a multisingular hyperbolic set has the shadowing property then it is hyperbolic.
\end{abstract}

\section{Introduction}
The geometric Lorenz attractor is one of the most important examples for the flow theory. Its dynamics are rich and robust, but somewhat delicate. Indeed, it shares many important properties with a hyperbolic set while itself is not a hyperbolic set. For instance, it is robustly transitive, robustly with periodic orbits dense, etc., however, unlike a hyperbolic set, it does not admit the robust shadowing property. In fact it does not admit the (honest, non-robust) shadowing property  \cite{Kom}. In this article we show that this no-shadowing phenomenon appears for a wide class of ``nearly-hyperbolic" sets, namely, the so called {\it singular hyperbolic sets} (among which the geometric Lorenz attractor is a particular example), or more generally, the so called {\it multisingular hyperbolic sets}. We will see that the key reason that causes the no-shadowing phenomenon for these sets is the existence of a singularity. Indeed, we will show that every chain transitive multisingular hyperbolic set with a singularity does not admit the shadowing property. This highlights the striking and delicate difference between flows with a singularity and flows without singularities.

Let $M$ be a $d$-dimensional compact smooth Riemmanian manifold without boundary. Denote by $\mathcal{X}^1(M)$ the set of $C^1$ vector fields on $M$ endowed with the $C^1$ topology. For any $X\in\mathcal{X}^1(M)$, denote by $\varphi_t=\varphi_t^X: M\to M$ the flow generated by $X$. Let $\Phi_t=\Phi^X_t=D\varphi^X_t$ be the tangent flow generated by $\varphi_t$. First we state the definition of singular hyperbolic set.

A compact invariant set of $\varphi_t$ is called {\it positively singular hyperbolic} for $X$ if every singularity in $\Lambda$ is hyperbolic and if there is a $\Phi_t$ invariant splitting $T_\Lambda M=E^{ss}\oplus E^{cu}$ with two constants $C>1,\lambda>0$ such that the following three conditions are satisfied:

(i) $E^{ss}\oplus E^{cu}$ is a $(C,\lambda)$ dominated splitting, that is, $\|\Phi_t|_{E^{ss}(x)}\|\cdot\|\Phi_{-t}|_{E^{cu}(\varphi_t(x))}\|\leq Ce^{-\lambda t}$ for any $x\in\Lambda$ and any $t\geq0$;

(ii)  $E^{ss}$ is $(C,\lambda)$-contracting for $\Phi_t$, that is, $\|\Phi_t|_{E^{ss}(x)}\|\leq Ce^{-\lambda t}$ for any $x\in\Lambda$ and any $t\geq0$;

(iii) $E^{cu}$ is $(C,\lambda)$-area expanding, that is, $\|\wedge^2 \Phi_{-t}|_{E^{cu}(x)}\|<Ce^{-\lambda t}$ for any $x\in\Lambda$ and any $t\geq0$.

\noindent A compact invariant set $\Lambda$ is called {\it negatively singular hyperbolic} for $X$ if it is positively singular hyperbolic for $-X$. If $\Lambda$ is either negatively or positively singular hyperbolic for $X$, then we say that $\Lambda$ is {\it singular hyperbolic} of $X$.

The concept of singular hyperbolic set is introduced  by Morales, Pacifico and Pujals \cite{MPP} to characterize Lorenz-like dynamics. Briefly, it is a special kind of partially hyperbolic sets, very close to hyperbolic sets. In fact if a singular hyperbolic set contains no singularities then it reduces to a hyperbolic set. It is not a surprise that a singular hyperbolic set shares many properties with a  hyperbolic set, for instance the important expansiveness: it was proved that a singular hyperbolic attractor in $3$-dimensional manifold has the $K^*$-expansiveness (see \cite{APPV}) and any singular hyperbolic set has the rescaled expansiveness (see \cite{WW}). However, it does not share an important property of a hyperbolic set: the shadowing property.

Recall that, given $\delta>0$,  a sequence of $\{(x_i, t_i) | x_i\in M, t_i\geq 1\}_{i=a}^{b}~ (-\infty\leq a<b\leq+\infty)$ is called a $\delta$-pseudo orbit of $\varphi_t$ if $d(\varphi_{t_i}(x_i),x_{i+1})<\delta$ for all integer $a\leq i<b$. We say that an invariant set $\Lambda$ of $\varphi_t$ has the {\it shadowing property} or the {\it pseudo orbit tracing property} if for any $\varepsilon>0$ there is $\delta>0$ such that for any $\delta$-pseudo orbit  $\{(x_i, t_i) | x_i\in M, t_i\geq 1\}_{i=a}^{b}$ of $\varphi_t$ with all $x_i\in\Lambda$, there is a point $x\in M$ and an increasing homeomorphism $\theta:\mathbb{R}\to\mathbb{R}$ such that $d(\varphi_{t-s_i}(x_i), \varphi_{\theta(t)}(x))<\varepsilon$ for all $t\in[s_i,s_{i+1}]$ where $s_i$ is a sequence with $s_0=0$ and $s_i-s_{i-1}=t_i$ for all $i$. If, in addition, the shadowing points $x$ is contained in $\Lambda$, then we say ${\Lambda}$ has the {\it intrinsic shadowing property}.

Recall that a compact invariant set $\Lambda$ is called {\it chain transitive} if for any $x,y\in\Lambda$ and any $\delta>0$, there is a $\delta$-pseudo orbit $\{(x_i, t_i)\}_{i=1}^n$ in $\Lambda$ such that $x_1=x, x_n=y$. We say that a  compact invariant set is {\it nontrivial} if it does not reduce to a single orbit.

In this article, we prove that every nontrivial singular hyperbolic chain transitive set with a singularity does not admit the intrinsic shadowing property:
\vskip 0.2cm
{\bf Theorem A.}
{\it Let $\Lambda$ be a nontrivial singular hyperbolic chain transitive  set of $\varphi_t$. If ${\Lambda}$ has the intrinsic shadowing property, then it admits no singularity.}
\vskip 0.2cm

The concept of singular hyperbolic set is  closely related to the so called star flows.  Recall that a $C^1$ vector field $X$  is called a {\it star vector field} if there is a $C^1$ neighborhood $\mathcal{U}$ of $X$ in $\mathcal{X}^1(M)$ such that for any $Y\in\mathcal{U}$, every critical element $p$ of $Y$ is hyperbolic. Here by a {\it critical element} we mean either a singularity or a periodic orbit. Since a non-hyperbolic critical element can be easily turned into a hyperbolic one by an arbitrarily small perturbation of the vector field, a structurally stable vector field must have at least  the property that every critical element is hyperbolic. Thus the star property is just a simple necessary condition for a system to be structurally stable, and the set of star vector fields constitutes an important class of dynamical systems from the point of view of perturbations. A striking example of star flows is the geometric Lorenz attractor.

We give an application of Theorem A to star flows. The statement needs the notion of chain recurrent set of a vector field. Recall a point $x$ is called a {\it chain recurrent point} of $X$ if for any $\delta>0$, there exists a $\delta$-pseudo orbit $\{x_i\}_{i=0}^{n}$ with $n\geq1$ such that $x_0=x=x_n$. The set of chain recurrent points of $X$ is called the {\it chain recurrent set} of $X$, denoted by $CR(X)$. For any $x,y\in CR(X)$, we say that $x$ is {\it chain equivalent}, written $x\sim y$, if for any $\delta>0$, there exist two $\delta$-pseudo orbits $\{x_i\}_{i=0}^n ~(n\geq 1)$ and $\{y_i\}_{i=0}^m ~(m\geq 1)$ such that $x_0=x, x_n=y$ and $y_0=y, x_m=x$. The binary relation $\sim $ is an equivalent relationship on $CR(X)$. Every equivalent class of $\sim$ is called a {\it chain class} of $X$. It is easy to see that every chain  class is chain transitive. Given a critical element $p$, denote by $C(p)$ the chain class containing $p$. Here is an application of Theorem A to star flows. The statement concerns the standard notions of Axiom A and the no-cycle condition of S. Smale (definitions omitted).

\vskip 0.2cm
{\bf Theorem B.}
If $X$ is a star vector field such that the chain recurrent set $CR(X)$ has the shadowing property, then $X$ satisfies Axiom A and the no-cycle condition.
\vskip 0.2cm

Now we generalize Theorem A to the following Theorem C that refers to the so called multisingular hyperbolic sets introduced by Bonatti and da Luz \cite{Bl}, which generalizes the notion  of singular hyperbolicity and characterizes the star flows.  The  precise definition of multisingular hyperbolicity will be given in Section 4. Note that characterizing star flows  has been a long standing problem proposed by Liao and Ma\~n\'e in the late seventies of the last century. For flows without singularities, the star property is equivalent to Axiom A plus the no-cycle condition \cite {GW}. For flows with singularities, in dimension 3 and on a chain class $\Lambda$, the star property is generically equivalent to singular hyperbolicity \cite {Y, MP}. In higher dimensions and on a chain class $\Lambda$, the star property is genericlly  equivalent to singular hyperbolicity  as long as singularities of $\Lambda$ have the same index \cite {SGW}.  Because the notion of singular hyperbolicity forces singularities in a chain class to have the same index, to use singular hyperbolicity to characterize the star flows,  the only point that was left in dark is whether every chain class $\Lambda$ of a generic star flow must have singularities of the same index. Surprisingly, da Luz \cite {Bl} found a striking example of star flow with a chain class that has, robustly, singularities of different indices. This means that, for the characterization of (generic) star flows, the notion of singular hyperbolicity is too restrictive. Bonatti and da Luz \cite {Bl} then introduced a notion that relaxes singular hyperbolicity, called multisingular hyperbolity, that allows in particular different indices of singularities in a chain class, and prove that the star property is, on a chain class, generically equivalent to multisingular hyperbolicity. Thus the long standing problem of characterizing the star flows eventually reached a generic answer, formulated in terms of the new notion of multisingular hyperbolic sets.
 Since then quite some results that hold for singular hyperbolic sets are found to hold also for multisingular hyperbolic sets. The next result generalizes Theorem A from singular hyperbolic sets to multisingular hyperbolic sets.

\vskip 0.2cm
{\bf Theorem C.}
Let $\Lambda$ be a nontrivial multisingular hyperbolic chain transitive set of $\varphi_t$. If $\Lambda$ has the intrinsic shadowing property, then $\Lambda$ contains no singularity.
\vskip 0.2cm

The proofs for Theorem A, B and C will be given in the following three sections, respectively.

\section{The Proof of Theorem A}

 Let $\sigma\in M$ be a hyperbolic singularity of $X$. We call the dimension of its stable space the {\it index} of $\sigma$, denoted $Ind(\sigma)$. Denote by $W^s(\sigma)$  and $W^u(\sigma)$ the stable and unstable manifolds for  $\sigma$. As usual, for any $x\in M$ and $r>0$, denote by $B(x,r)$ the $r$-ball centered at $x$.

The following lemma is standard. For completeness we include the proof here.

\begin{Lemma}\label{lem1}
Let $\Lambda$ be a nontrivial chain transitive set and $\sigma\in\Lambda$ be a hyperbolic singularity, then $(W^s(\sigma)\setminus\{\sigma\})\cap\Lambda\neq\emptyset$ and $(W^u(\sigma)\setminus\{\sigma\})\cap\Lambda\neq\emptyset$.
\end{Lemma}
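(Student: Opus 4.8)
The plan is to prove the statement $(W^s(\sigma)\setminus\{\sigma\})\cap\Lambda\neq\emptyset$; the claim for $W^u$ then follows by applying the result to the time-reversed flow $-X$, since $\Lambda$ remains nontrivial and chain transitive for $-X$, $\sigma$ remains a hyperbolic singularity, and $W^u(\sigma)$ for $X$ equals $W^s(\sigma)$ for $-X$. So from now on I focus on the stable manifold.

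First I would fix, by hyperbolicity of $\sigma$, a small $\varepsilon_0>0$ such that the local stable and unstable manifolds $W^s_{\varepsilon_0}(\sigma)$, $W^u_{\varepsilon_0}(\sigma)$ are well defined and such that orbits entering $B(\sigma,\varepsilon_0)$ but not lying on $W^s_{\varepsilon_0}(\sigma)$ eventually leave the ball (a standard consequence of the local product structure / Hartman--Grobman near $\sigma$). Since $\Lambda$ is nontrivial, it is not reduced to the single orbit $\{\sigma\}$, so we may pick a point $y\in\Lambda$ with $y\neq\sigma$; note $\Lambda$ being invariant and closed contains the whole orbit of $y$ and its $\omega$-limit set. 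Now I use chain transitivity: for every $n\geq 1$, taking $\delta=1/n$, there is a $\delta$-pseudo orbit in $\Lambda$ from $y$ to $\sigma$, i.e. a finite sequence $\{(x^{(n)}_i,t^{(n)}_i)\}_{i=1}^{k_n}$ with all $x^{(n)}_i\in\Lambda$, $x^{(n)}_1=y$, $x^{(n)}_{k_n}=\sigma$, and consecutive jumps smaller than $1/n$.

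The heart of the argument is to extract from these pseudo orbits a genuine orbit point of $\Lambda$ lying on $W^s(\sigma)\setminus\{\sigma\}$. Consider, for each $n$, the last index $j_n$ along the $n$-th pseudo orbit at which $x^{(n)}_{j_n}\notin B(\sigma,\varepsilon_0)$ while all subsequent points $x^{(n)}_i$ ($j_n<i\le k_n$) lie in $B(\sigma,\varepsilon_0)$; such an index exists provided $y\notin B(\sigma,\varepsilon_0)$, which we may arrange by shrinking $\varepsilon_0$ (if $y$ itself is in every such ball then $y=\sigma$, contradiction — more carefully, pick $\varepsilon_0<d(y,\sigma)$). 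The point $z_n:=x^{(n)}_{j_n+1}\in\Lambda$ lies within $1/n$ of $\varphi_{t^{(n)}_{j_n}}(x^{(n)}_{j_n})$, hence within $1/n$ of the boundary sphere-ish region near $\partial B(\sigma,\varepsilon_0)$, and moreover the forward pseudo-orbit from $z_n$ stays in $B(\sigma,\varepsilon_0)$ all the way to $\sigma$ with steps $<1/n$. Passing to a subsequence, $z_n\to z\in\Lambda$ with $z\in\overline{B(\sigma,\varepsilon_0)}$ and (by a limiting argument on the pseudo orbits, using continuity of the flow and the fact that the steps and the "mismatch" at $\sigma$ shrink to $0$) the forward orbit of $z$ remains in $\overline{B(\sigma,\varepsilon_0)}$ for all $t\ge 0$. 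By the choice of $\varepsilon_0$, any forward orbit that stays in $\overline{B(\sigma,\varepsilon_0)}$ for all positive time must lie on $W^s_{\varepsilon_0}(\sigma)$, so $z\in W^s(\sigma)$. Finally $z\neq\sigma$: one arranges, again by shrinking $\varepsilon_0$, that $d(z_n,\sigma)$ is bounded below — indeed $z_n$ is essentially on the exit side, a definite distance from $\sigma$ — so the limit $z$ satisfies $d(z,\sigma)>0$. This gives $z\in(W^s(\sigma)\setminus\{\sigma\})\cap\Lambda$.

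The main obstacle I anticipate is making rigorous the "limiting argument on the pseudo orbits" in the previous paragraph: a priori the pseudo-orbit tails from $z_n$ to $\sigma$ could have unboundedly many steps $k_n-j_n$ and unbounded total time, so one cannot simply take a naive limit of fixed-length orbit segments. The clean way around this is to phrase the conclusion purely in terms of forward invariance of the closed ball: show that for each fixed $T>0$ and each $n$ large, the true orbit segment $\{\varphi_t(z_n):0\le t\le T\}$ stays within $\varepsilon_0 + o(1)$ of $\sigma$, by using the pseudo-orbit steps to control drift over time $T$ (only finitely many steps of size $<1/n$ occur in time $T$ since each $t_i\ge 1$, so the accumulated error is $<T/n\to 0$); then let $n\to\infty$ to conclude $\varphi_t(z)\in\overline{B(\sigma,\varepsilon_0)}$ for all $t\in[0,T]$, and then let $T\to\infty$. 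Combined with the lower bound on $d(z_n,\sigma)$ and the local characterization of $W^s_{\varepsilon_0}(\sigma)$, this closes the proof. An alternative, slicker route is to invoke the known fact that a chain transitive set is contained in a single chain class and use that $\sigma\in\Lambda$ together with nontriviality forces a nontrivial chain connection into $\sigma$, but the hands-on argument above is self-contained and is presumably what the authors intend by "standard."
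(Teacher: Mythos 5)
Your proposal takes a different route from the paper's, and as written it has two genuine gaps that the paper's argument is specifically designed to avoid. The first gap is in the confinement step. You choose $j_n$ so that all \emph{jump points} $x^{(n)}_i$ with $i>j_n$ lie in $B(\sigma,\varepsilon_0)$, and then want to conclude that the pseudo-orbit tail, hence (up to $o(1)$) the true forward orbit of $z_n$, stays near $\sigma$. But a $\delta$-pseudo orbit constrains only the jump points; the connecting true-orbit segments $\varphi_{[0,t_i]}(x_i)$, whose durations $t_i\geq 1$ can be arbitrarily large, are free to leave $B(\sigma,\varepsilon_0)$ and return before the next jump (e.g.\ $x_i$ near $\sigma$ on the unstable side, wandering through $\Lambda$, landing within $1/n$ of $x_{i+1}$ near $\sigma$ again). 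So the premise of your ``accumulated error $<T/n$'' estimate --- that the pseudo-orbit \emph{curve} stays in $B(\sigma,\varepsilon_0)$ --- is not established. The second gap is the lower bound on $d(z_n,\sigma)$: $z_n=x^{(n)}_{j_n+1}$ is merely a jump \emph{target} inside $B(\sigma,\varepsilon_0)$, and nothing in the construction keeps it away from $\sigma$; it can be arbitrarily close to, or even equal to, $\sigma$ (if $j_n+1=k_n$), so the limit $z$ could be $\sigma$.

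The paper sidesteps both issues by extracting the limit from first-exit boundary crossings of \emph{true} orbits. It first proves a preparatory Claim: for every small $\varepsilon$ there is a point of $\Lambda\cap B(\sigma,\varepsilon)$ not lying on the local stable set $W^s_{\varepsilon'}(\sigma)$. The mechanism is absorption: since $\varphi_{t}$ with $t\geq 1$ carries $W^s_{\varepsilon'}(\sigma)$ a definite distance $\delta$ away from its boundary, if $\Lambda\cap B(\sigma,\varepsilon)\subset W^s_{\varepsilon'}(\sigma)$ then every $\delta$-pseudo orbit in $\Lambda$ starting at $\sigma$ would be trapped in $W^s_{\varepsilon'}(\sigma)$ forever, contradicting chain transitivity to a point $y\in\Lambda$ with $d(y,\sigma)\geq\varepsilon'$. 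With the Claim, one picks $x_n\to\sigma$ in $\Lambda\setminus W^s_{\varepsilon_0}(\sigma)$, flows each forward to its \emph{first} exit time $t_n$ from $B(\sigma,\varepsilon_0)$, and takes an accumulation point $y$ of the exit points $\varphi_{t_n}(x_n)$. By construction $d(y,\sigma)=\varepsilon_0$ exactly (so $y\neq\sigma$), the orbit segment $\varphi_{[0,t_n]}(x_n)$ is confined to $B(\sigma,\varepsilon_0)$ by definition of first exit, and $t_n\to\infty$, giving $d(\varphi_t(y),\sigma)\leq\varepsilon_0$ for all $t\leq 0$; hence $y\in(W^u(\sigma)\setminus\{\sigma\})\cap\Lambda$, and the $W^s$ case is symmetric. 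Your instinct to use chain transitivity to pull points of $\Lambda$ near $\sigma$ is right, but the extraction must be done on true-orbit boundary crossings, not on pseudo-orbit jump targets, and it requires the preparatory Claim to guarantee such orbit segments exist.
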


\begin{proof}
Since $\sigma$ is hyperbolic, we can find $\varepsilon_0>0$ such that, for any $z\in M$, if $d(\varphi_t(z),\sigma)\leq\varepsilon_0$ for all $t\geq 0$ then $z\in W^s(\sigma)$. Likewise, if $d(\varphi_{-t}(z),\sigma)\leq\varepsilon_0$ for all $t\geq 0$ then $z\in W^u(\sigma)$. For any $0<\varepsilon<\varepsilon_0$, denote by
$$W_{\varepsilon}^s(\sigma)=\{x : d(\varphi_t(x),\varphi_t(\sigma))\leq\varepsilon, \forall t>0\}.$$

\begin{Claim}
For any $0<\varepsilon<\varepsilon_0$, there is a point $x\in (B(\sigma, \varepsilon)\cap\Lambda)\setminus W_{\varepsilon}^s(\sigma)$.
\end{Claim}

In fact,
suppose for the contrary that there is $\varepsilon'>0$ such that $B(\sigma, \varepsilon)\cap\Lambda\subset W_{\varepsilon'}^s(\sigma)$.
We know that there is a $\delta>0$ such that
$$d(\bigcup_{t\geq1}\varphi_t(W_{\varepsilon'}^s(\sigma)),\partial (W_{\varepsilon'}^s(\sigma)))>\delta.$$
It is easy to see that $\delta <\varepsilon'$. By the assumption that $\Lambda$ is nontrivial we know that there is a point $y\in\Lambda$ which differs from $\sigma$. Without lost of generality we assume $d(\sigma, y)\geq \varepsilon'$. Since $\Lambda$ is chain transitive, there is a $\delta$-pseudo orbit $\{x_0,x_1,\cdots,x_n\}$ such that $x_0=\sigma$ and $x_n=y$. By the choice of $\delta$ we know that $d(x_1, \sigma)<\delta<\varepsilon'$. Since $B(\sigma, \varepsilon)\cap\Lambda\subset W_{\varepsilon'}^s(\sigma)$, it follows that $x_1\in W^s_{\varepsilon'}(\sigma)$. Because $d(x_2,\varphi_{t_1}(x_1))<\delta$ and $\varphi_{t_1}(x_1)\in\bigcup_{t\geq1}\varphi_t(W_{\varepsilon'}^s(\sigma))$ it follows that $x_2\in\Lambda\cap B_{\varepsilon'}(\sigma)$. Hence $x_2\in W^s_{\varepsilon'}(\sigma)$. Inductively, $x_i\in W^s_{\varepsilon'}(\sigma)$ for all $i=0,1,\cdots,n$. This contradicts  $y=x_n\notin B_{\varepsilon'}(\sigma)$ and proves the Claim.

By the Claim we can find a sequence $\{x_n\}$ with all $x_n\in\Lambda\setminus W^s_{\varepsilon_0}(\sigma)$ such that $x_n\to\sigma$ as $n\to\infty$. Since $x_n\notin W^s_{\varepsilon_0}(\sigma)$, for $n$ sufficiently large, there is $t_n$ such that $d(\varphi_{t_n}(x_n),\sigma)=\varepsilon_0$ and $d(\varphi_t(x),\sigma)<\varepsilon_0$ for all $t\in(0, t_n)$. It is easy to see that $t_n\to\infty$ as $n\to\infty$.
Let $y$ be an accumulation point of $\{\varphi_{t_n}(x_n)\}$, then $d(y,\sigma)=\varepsilon_0$, and $d(\varphi_t(y),\sigma)\leq\varepsilon_0$ for all $t\leq 0$. One can check that $y\in (W^u(\sigma)\setminus\{\sigma\})\cap\Lambda$. Similar arguments prove that $(W^s(\sigma)\setminus\{\sigma\})\cap\Lambda\neq\emptyset$.
\end{proof}

\begin{Lemma}\label{lem2}
Let $\Lambda$ be a positively singular hyperbolic set with splitting $T_\Lambda M=E^{ss}\oplus E^{cu}$, then $X(x)\in E^{cu}(x)$ for any $x\in\Lambda$.
\end{Lemma}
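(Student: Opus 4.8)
The plan is to show that the $E^{ss}$-component of the vector field vanishes identically on $\Lambda$, using only the uniform contraction of $E^{ss}$ together with the compactness and invariance of $\Lambda$; the area-expansion hypothesis (iii) will not be needed, and even the domination (i) is needed only to guarantee continuity of the splitting. For $x\in\Lambda$ write the direct-sum decomposition $X(x)=X^{ss}(x)+X^{cu}(x)$ with $X^{ss}(x)\in E^{ss}(x)$ and $X^{cu}(x)\in E^{cu}(x)$. Since $E^{ss}\oplus E^{cu}$ is a dominated splitting it is continuous on $\Lambda$, so $x\mapsto X^{ss}(x)$ is a continuous section over the compact set $\Lambda$ and hence bounded: there is $K>0$ with $\|X^{ss}(y)\|\le K$ for all $y\in\Lambda$.

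First I would record the $\Phi_t$-invariance of the $ss$-component. Because $X$ is $\Phi_t$-invariant, $\Phi_t(X(x))=X(\varphi_t(x))$; since $\Phi_t$ is linear and preserves the splitting, $\Phi_t(X^{ss}(x))\in E^{ss}(\varphi_t(x))$ and $\Phi_t(X^{cu}(x))\in E^{cu}(\varphi_t(x))$, so by uniqueness of the decomposition of $X(\varphi_t(x))$ relative to $E^{ss}(\varphi_t(x))\oplus E^{cu}(\varphi_t(x))$ we get $X^{ss}(\varphi_t(x))=\Phi_t(X^{ss}(x))$ for every $t$. (For a singularity $\sigma\in\Lambda$ the claim is anyway trivial, since $X(\sigma)=0\in E^{cu}(\sigma)$.)

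Next I would push the $ss$-component backwards in time. By condition (ii), $\|\Phi_t|_{E^{ss}(z)}\|\le Ce^{-\lambda t}$ for all $z\in\Lambda$ and $t\ge 0$; applying this with $z=\varphi_{-t}(x)$ to the vector $\Phi_{-t}(X^{ss}(x))\in E^{ss}(\varphi_{-t}(x))$, and using $X^{ss}(x)=\Phi_t\big(\Phi_{-t}(X^{ss}(x))\big)$, gives $\|X^{ss}(x)\|\le Ce^{-\lambda t}\,\|\Phi_{-t}(X^{ss}(x))\|=Ce^{-\lambda t}\,\|X^{ss}(\varphi_{-t}(x))\|$, that is, $\|X^{ss}(\varphi_{-t}(x))\|\ge C^{-1}e^{\lambda t}\,\|X^{ss}(x)\|$ for all $t\ge 0$. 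Now suppose $X^{ss}(x)\ne 0$ for some $x\in\Lambda$. Since $\Lambda$ is invariant, every point $\varphi_{-t}(x)$ lies in $\Lambda$, so the left-hand side is bounded by $K$, while the right-hand side tends to $\infty$ as $t\to\infty$ — a contradiction. Hence $X^{ss}(x)=0$, i.e.\ $X(x)\in E^{cu}(x)$, for every $x\in\Lambda$.

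There is essentially no serious obstacle; the only steps needing a little care are the identification $X^{ss}(\varphi_t(x))=\Phi_t(X^{ss}(x))$, which rests on the $\Phi_t$-invariance of \emph{both} subbundles plus uniqueness of the splitting, and the use of continuity of a dominated splitting (standard) to get the uniform bound $\|X^{ss}(\cdot)\|\le K$ on $\Lambda$.
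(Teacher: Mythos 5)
Your proof is correct and follows essentially the same line of reasoning as the paper's: the $E^{ss}$-part of the vector field would grow exponentially under backward time if nonzero, contradicting the boundedness of $X$ on the compact set $\Lambda$. The only cosmetic difference is that you work directly with the $E^{ss}$-component $X^{ss}(x)$ (which is a bit cleaner), whereas the paper tracks the full vector $\Phi_{-t}(X(x))$ and argues its direction aligns with $E^{ss}$ via domination before invoking backward expansion.
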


\begin{proof}
Assume the contrary that there is $x\in\Lambda$ such that $X(x)\notin E^{cu}(x)$. In particular, $\|X(x)\|\not= 0$. We consider the limit of $\Phi_{-t}(X(x))$ as $t\to+\infty$. Since $E^{ss}\oplus E^{cu}$ is a dominated splitting we know that the direction of $\Phi_{-t}(X(x))$ will converge into $E^{ss}(\varphi_t(x))$ as $t\to+\infty$. Since $E^{ss}$ is exponentially expanding by $\Phi_{-t}$, $\|\Phi_{-t}(X(x))\|$ will tend to the positive infinity as $t\to+\infty$. This contradicts  the fact that the norms $\|X(x)\|$ for $x\in \Lambda$ are bounded  on the compact set $\Lambda$.
\end{proof}

Since all singular hyperbolic sets $\Lambda$ considered below are  nontrivial, we will generally omit
the adjective ``nontrivial".

\begin{Lemma}\label{lem3}
Let $\Lambda$ be a positively singular hyperbolic chain transitive set with splitting $T_\Lambda M=E^{ss}\oplus E^{cu}$ and $\sigma\in\Lambda\cap Sing(X)$. If there is a point $x\in (W^s_\sigma\setminus\{\sigma\})\cap\Lambda$, then $\sigma$ has index $\dim(E^{ss})+1$.
\end{Lemma}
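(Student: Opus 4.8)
The plan is to translate the statement into a computation of the index $Ind(\sigma)=\dim E^{s}(\sigma)$ of the linearization, using condition (ii) to locate $E^{ss}(\sigma)$ inside $E^{s}(\sigma)$, condition (iii) for an upper bound, and Lemma~\ref{lem2} together with the hypothesis on $x$ for a lower bound. Write $A=DX(\sigma)$, so that on $T_{\sigma}M$ one has $\Phi_{t}=e^{tA}$; here $E^{s}(\sigma)$ denotes the stable eigenspace of $A$, i.e. the sum of the generalized eigenspaces for eigenvalues of negative real part, and $Ind(\sigma)=\dim E^{s}(\sigma)$ because $\sigma$ is a hyperbolic singularity of $\Lambda$. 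The subspaces $E^{ss}(\sigma)$ and $E^{cu}(\sigma)$ are $A$-invariant, and condition (ii) forces every eigenvalue of $A|_{E^{ss}(\sigma)}$ to have negative real part, so $E^{ss}(\sigma)\subseteq E^{s}(\sigma)$. Then the projection of $T_{\sigma}M$ onto $E^{cu}(\sigma)$ along $E^{ss}(\sigma)$ carries $E^{s}(\sigma)$ onto $E^{s}(\sigma)\cap E^{cu}(\sigma)$ with kernel $E^{ss}(\sigma)$, whence $Ind(\sigma)=\dim(E^{ss})+\dim\bigl(E^{s}(\sigma)\cap E^{cu}(\sigma)\bigr)$. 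So it suffices to show that $\dim\bigl(E^{s}(\sigma)\cap E^{cu}(\sigma)\bigr)=1$.

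For the upper bound I would argue by contradiction. If $\dim\bigl(E^{s}(\sigma)\cap E^{cu}(\sigma)\bigr)\ge2$, then, this intersection being $A$-invariant, its real Jordan form yields a $2$-dimensional $A$-invariant subspace $P\subseteq E^{s}(\sigma)\cap E^{cu}(\sigma)$. Since $P\subseteq E^{s}(\sigma)$ we get $\mathrm{tr}(A|_{P})<0$, hence $\|\wedge^{2}\Phi_{-t}|_{P}\|=|\det(\Phi_{-t}|_{P})|=e^{-t\,\mathrm{tr}(A|_{P})}\to+\infty$ as $t\to+\infty$. On the other hand $\wedge^{2}P\subseteq\wedge^{2}E^{cu}(\sigma)$, so condition (iii) gives $\|\wedge^{2}\Phi_{-t}|_{P}\|\le\|\wedge^{2}\Phi_{-t}|_{E^{cu}(\sigma)}\|<Ce^{-\lambda t}\to0$, a contradiction. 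Hence $\dim\bigl(E^{s}(\sigma)\cap E^{cu}(\sigma)\bigr)\le1$.

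For the lower bound, which is the only place where the hypothesis on $x$ enters, I would first observe that $x$ is a regular point: if it were a singularity then $\varphi_{t}(x)\equiv x$, contradicting $\varphi_{t}(x)\to\sigma\neq x$. Thus $X(\varphi_{t}(x))=\Phi_{t}(X(x))\neq0$ for all $t$, $\varphi_{t}(x)\in\Lambda$, and $\varphi_{t}(x)\to\sigma$ as $t\to+\infty$. Put $u_{t}=X(\varphi_{t}(x))/\|X(\varphi_{t}(x))\|$, and let $u$ be an accumulation point of $\{u_{t}\}$ along some $t_{n}\to+\infty$; then $u\in T_{\sigma}M$ and $\|u\|=1$. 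By Lemma~\ref{lem2}, $u_{t}\in E^{cu}(\varphi_{t}(x))$; since a dominated splitting over a compact invariant set varies continuously, $E^{cu}(\varphi_{t_{n}}(x))\to E^{cu}(\sigma)$, and therefore $u\in E^{cu}(\sigma)$. On the other hand, for $t$ large the orbit $\varphi_{t}(x)$ lies on the local stable manifold $W^{s}_{\mathrm{loc}}(\sigma)$, which is $C^{1}$ and tangent to $E^{s}(\sigma)$ at $\sigma$, and $X(\varphi_{t}(x))$ is tangent to it; letting $\varphi_{t_{n}}(x)\to\sigma$ gives $u\in T_{\sigma}W^{s}_{\mathrm{loc}}(\sigma)=E^{s}(\sigma)$. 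Hence the unit vector $u$ lies in $E^{s}(\sigma)\cap E^{cu}(\sigma)$, so $\dim\bigl(E^{s}(\sigma)\cap E^{cu}(\sigma)\bigr)\ge1$. Combining the three steps, $Ind(\sigma)=\dim(E^{ss})+1$.

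I expect the lower bound to be the main obstacle: one must ensure that the direction of the vector field along the incoming orbit accumulates on an honest nonzero vector of $E^{s}(\sigma)\cap E^{cu}(\sigma)$, and this requires both the $C^{1}$ tangency of the stable manifold at $\sigma$ and the continuity of the dominated splitting. The remaining ingredients are just the linear algebra of the hyperbolic fixed point $\sigma$ and direct applications of conditions (ii), (iii) and Lemma~\ref{lem2}.
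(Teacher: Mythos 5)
Your proof is correct and follows essentially the same strategy as the paper: the inclusion $E^{ss}(\sigma)\subseteq E^s(\sigma)$ from contraction, Lemma~\ref{lem2} plus the incoming stable orbit to produce a flow direction accumulating in $E^s(\sigma)\cap E^{cu}(\sigma)$, and area expansion of $E^{cu}$ to cap the intersection at one dimension. The only organizational difference is that you package the count as $Ind(\sigma)=\dim E^{ss}+\dim\bigl(E^s(\sigma)\cap E^{cu}(\sigma)\bigr)$ via rank--nullity and bound the second summand directly, which lets you bypass the paper's appeal to uniqueness of the dominated splitting of fixed index at $\sigma$; conversely, the paper's upper bound uses the pointwise estimate $\|\wedge^2\Phi_t(u\wedge v)\|\le\|\Phi_t(u)\|\,\|\Phi_t(v)\|$ for arbitrary $u,v\in E^s_\sigma\cap E^{cu}(\sigma)$, which is slightly more economical than your passage through a $2$-dimensional $A$-invariant subspace $P$, though both are valid.
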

\begin{proof}
Let $\sigma\in\Lambda$ be a singulrity of $X$. Let $T_\sigma M=E^s_\sigma\oplus E^u_\sigma$ be the hyperbolic splitting of $\sigma$. Since $E^{ss}_\sigma$ is contracting, it follows that $E^{ss}_\sigma\subset E^s_\sigma$. Hence $Ind(\sigma)\geq\dim E^{ss}$.

Suppose $Ind(\sigma)=\dim E^{ss}$. Since for a fixed index the dominated splitting of $T_\sigma M$ is unique, we have
$E^{ss}_\sigma=E^s_\sigma$ and $E^{cu}_\sigma=E^u_\sigma$. By the assumption we know that there is a point $x\in (W^s_\sigma\setminus\{\sigma\})\cap\Lambda$. Note that $X(x)\not= 0$.  By the invariance of $W^s_\sigma$ we know that ${\rm Orb}(x)\subset W^s_\sigma$ and hence $X(x)\in T_x W^s_\sigma$. Since  $\Phi_t(T_x W^s_\sigma)$  converges to $E^{s}_\sigma=E^{ss}_\sigma$ as $t\to+\infty$, it follows that the limit set of $\Phi_t(<X(x)>)$ as $t\to \infty$  is contained in $E^{ss}_\sigma$. On the other hand, by Lemma \ref{lem2} we know that $X(x)\in E^{cu}(x)$ and hence the limit set of $\Phi_t(<X(x)>)$ as $t\to +\infty$  is contained in $E^{cu}_\sigma$. This contradiction proves that $Ind(\sigma)>\dim E^{ss}$.

Suppose $Ind(\sigma)\geq\dim E^{ss}+2$, then we can find two linearly independent  unit vectors $u, v\in E^{s}_\sigma\cap E^{cu}(\sigma)$. Since
$$\|\wedge^2\Phi_t(u\wedge v)\|\leq\|\Phi_t(u)\|\cdot\|\Phi_t(v)\|,$$
we can easily check that $\|\wedge^2\Phi_t(u\wedge v)\|\to 0$ as $t\to+\infty$. This contradict that $E^{cu}$ is area expanding. This proves that $Ind(\sigma)<\dim E^{ss}+2$, and hence $Ind(\sigma)=\dim(E^{ss})+1$.
\end{proof}

\begin{Lemma}\label{lem6}
Let $\Lambda$ be a chain transitive set of $X$ and $\sigma\in\Lambda$ be a hyperbolic singularity. If $\Lambda$ has the shadowing property, then for any neighborhood $U$ of $\Lambda$, there is a point $x\in(W^s(\sigma)\cap W^u(\sigma))\setminus \{\sigma\}$ such that $Orb(x)\subset U$. Moreover, if $\Lambda$ has the intrinsic shadowing property, then the homoclinic point $x$ can be chosen in $\Lambda$.
\end{Lemma}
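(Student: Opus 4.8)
The plan is to realize the homoclinic point as the shadow of a carefully designed pseudo orbit. By Lemma~\ref{lem1} fix a point $y\in(W^u(\sigma)\setminus\{\sigma\})\cap\Lambda$; since $\varphi_{-t}(y)\to\sigma$ as $t\to+\infty$, the orbit of $y$ leaves $\sigma$ along the unstable manifold but passes arbitrarily close to $\sigma$ in the far past. The idea is to build a $\delta$-pseudo orbit lying in $\Lambda$ that sits at $\sigma$ for all sufficiently negative times, then jumps to a far-back point $\varphi_{-T_1}(y)$ of $Orb(y)$, rides this orbit out to $y$, then follows a $\delta$-chain inside $\Lambda$ from $y$ back to $\sigma$ (available by chain transitivity, since $\sigma\in\Lambda$), and finally sits at $\sigma$ for all sufficiently large times. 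Shadowing such a pseudo orbit by a true orbit should produce a point $x$ that is both forward and backward asymptotic to $\sigma$, that is not the orbit of $\sigma$ itself, and whose orbit stays near $\Lambda$.

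For the details, I would first fix a neighborhood $U$ of $\Lambda$ and, exactly as in the proof of Lemma~\ref{lem1}, a constant $\varepsilon_0>0$ such that $d(\varphi_t(z),\sigma)\le\varepsilon_0$ for all $t\ge0$ forces $z\in W^s(\sigma)$, and $d(\varphi_{-t}(z),\sigma)\le\varepsilon_0$ for all $t\ge0$ forces $z\in W^u(\sigma)$. Having chosen $y$, I would pick $\varepsilon>0$ with $\varepsilon<\min\{\varepsilon_0,d(y,\sigma)\}$ and small enough that the $\varepsilon$-neighborhood of $\Lambda$ is contained in $U$, and let $\delta>0$ be the associated shadowing constant. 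Then I would pick $T_1\ge1$ with $d(\varphi_{-T_1}(y),\sigma)<\delta$, take a $\delta$-pseudo orbit $\{(w_j,r_j)\}_{j=1}^{m}$ in $\Lambda$ with $w_1=y$ and $w_m=\sigma$, and assemble the bi-infinite $\delta$-pseudo orbit $\{(x_i,t_i)\}_{i=-\infty}^{+\infty}$ given by $x_i=\sigma,\ t_i=1$ for $i\le0$; $x_1=\varphi_{-T_1}(y),\ t_1=T_1$; $x_{1+j}=w_j,\ t_{1+j}=r_j$ for $1\le j\le m$; and $x_i=\sigma,\ t_i=1$ for $i\ge m+2$. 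Verifying that this is a $\delta$-pseudo orbit is routine (the only jump requiring a choice is $d(\varphi_1(\sigma),\varphi_{-T_1}(y))=d(\sigma,\varphi_{-T_1}(y))<\delta$, ensured by taking $T_1$ large; the jump $d(\varphi_{T_1}(x_1),w_1)=d(y,y)=0$ is exact, and the remaining jumps hold by construction). Crucially, since $y\in\Lambda$ and $\Lambda$ is compact invariant, every base point and every orbit arc traced by the flow along this pseudo orbit lies in $\Lambda$.

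Finally I would apply the shadowing property to get a point $x\in M$ and an increasing homeomorphism $\theta:\mathbb{R}\to\mathbb{R}$ with $d(\varphi_{t-s_i}(x_i),\varphi_{\theta(t)}(x))<\varepsilon$ on each $[s_i,s_{i+1}]$. Because $\theta$ is onto $\mathbb{R}$ and the traced arcs lie in $\Lambda$, every point of $Orb(x)$ lies within $\varepsilon$ of $\Lambda$, hence $Orb(x)\subset U$. For $t\le0$ the base point is $\sigma$, so $d(\varphi_{\theta(t)}(x),\sigma)<\varepsilon<\varepsilon_0$ for all $t\le0$; since $\theta$ maps $(-\infty,0]$ onto $(-\infty,\theta(0)]$, the orbit of $x$ stays within $\varepsilon_0$ of $\sigma$ for all times $\le\theta(0)$, whence $x\in W^u(\sigma)$ by the choice of $\varepsilon_0$. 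Using that the base points equal $\sigma$ for all large $i$, the same argument in positive time gives $x\in W^s(\sigma)$. And $x\neq\sigma$: taking $t=s_2$ gives $d(y,\varphi_{\theta(s_2)}(x))=d(x_2,\varphi_{\theta(s_2)}(x))<\varepsilon<d(y,\sigma)$, which is impossible if $x=\sigma$. If $\Lambda$ has the intrinsic shadowing property then $x$ may be taken in $\Lambda$, which yields the last assertion.

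The step I expect to be the main obstacle is the last one: converting the combinatorial fact that the pseudo orbit is constantly $\sigma$ on a half-line of $s$-times into the dynamical statement that the true orbit of $x$ stays within $\varepsilon_0$ of $\sigma$ on a half-line of real times, which is exactly what lets one invoke the local stable/unstable manifold characterization and conclude $x\in W^s(\sigma)\cap W^u(\sigma)$. This requires being a little careful with the reparametrization $\theta$; by contrast, assembling the pseudo orbit, checking the $\delta$-estimates, and confirming that it remains inside $\Lambda$ are all routine.
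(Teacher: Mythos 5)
Your proof is correct and follows essentially the same route as the paper: construct a bi-infinite $\delta$-pseudo orbit inside $\Lambda$ that asymptotes to $\sigma$ in both time directions, apply shadowing, and use the characterization of $W^s(\sigma)$ and $W^u(\sigma)$ by long-time proximity to $\sigma$ to conclude. The only cosmetic difference is in the choice of tails — the paper extends the chain by the actual orbits of one stable- and one unstable-manifold point from Lemma~\ref{lem1}, whereas you use the constant sequence at $\sigma$ together with a far-back point of a single unstable-manifold orbit — and the verification is the same in both cases.
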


\begin{proof}
Let $U$ be a given neighborhood of $\Lambda$. We can choose $\varepsilon>0$ small enough such that $d(\varphi_t(x),\varphi_t(\sigma))\leq \varepsilon$ for all $t\geq 0$ implies $x\in W^s(\sigma)$ and such that $d(\varphi_{-t}(y), \varphi_{-t}(\sigma))\leq\varepsilon$ for all $t\geq 0$ implies $y\in W^u(\sigma)$. We may assume that $\varepsilon>0$ is small enough so that for any $x\in\Lambda$, $d(x,y)<\varepsilon$ implies $\varphi_t(y)\in U$ for all $t\in[0,1]$.

By Lemma \ref{lem1}  there are two points $x_1\in (W^s(\sigma)\setminus\{\sigma\})\cap\Lambda$ and $x_2\in W^u(\sigma)\setminus\{\sigma\})\cap\Lambda$. Without loss of generality we assume that $\varepsilon<d(x_1,\sigma)$. By the assumption of the shadowing property, we can choose $\varepsilon/2>\delta>0$ such that any $\delta$-pseudo orbit can be $\varepsilon/2$-shadowed by a true orbit. By the chain transitivity of $\Lambda$ we can find a $\delta$-pseudo orbit  $\{(y_i,t_i)\}_{i=0}^n(n\geq1)$ in $\Lambda$ such that $y_0=x_2$ and $y_n=x_1$. Let $y_i=\varphi_{i}(x_1), t_i=1$ for $i\geq n$ and $y_i=\varphi_{-i}(x_2), t_i=1$ for $i\leq-1$. Then $\{(y_i,t_i):i\in\mathbb{Z}\}$ is a $\delta$-pseudo orbit. Let $s_i$ be the sequence with $s_0=0$ and $s_i-s_{i-1}=t_i$ for all $i\in\mathbb{Z}$. Since $\Lambda$ has the shadowing property, we can find a point $z\in\Lambda$ with an increasing homeomorphism $\theta:\mathbb{R}\to\mathbb{R}$ such that $d(\varphi_{t-s_i}(x_i),\varphi_{\theta(t)}(z))<\varepsilon/2$ for all $t\in[s_i, s_{i+1}]$. For this point $z$ we can check that $d(\varphi_t(z),\sigma)<\varepsilon$ and $d(\varphi_{-t}(z),\sigma)<\varepsilon$ for $t$ big enough. By the choice of $\varepsilon$ we know that $y\in W^s(\sigma)\cap W^u(\sigma)\setminus\{\sigma\}$ and $Orb(y)\subset U$. If $\Lambda$ has the intrinsic shadowing property, then the points $y$ can be chosen in $\Lambda$. This ends the proof of the lemma.
\end{proof}

In the rest of this section, let $\Lambda$ be a positively singular hyperbolic set with at least one singularity. By taking $T>0$ large we can find and fix $\eta\in(0,1)$ such that
$$\|\Phi_T|_{E^{ss}(x)}\|<\eta\ \ \ \ \ \text{ and }\ \ \  \|\Phi_T|_{E^{ss}(x)}\|\cdot\|\Phi_{-T}|_{E^{cu}(\varphi_T(x))}\|<\eta$$
for all $x\in\Lambda$. By the invariant manifolds theorem (see \cite{HPS} or \cite[Proposition2.3]{Man}), there are local stable manifolds associated with the subbundle $E^{ss}$, that is, there exists a family of $C^1$ closed disks $\{W_{loc}^{ss}(x)|x\in\Lambda\}$ which varies continuously with respect to $x$ such that
\begin{enumerate}
\item $T_xW_{loc}^{ss}(x)=E^{ss}(x)$ for any $x\in\Lambda$;
\item $\varphi_T(W_{loc}^{ss}(x))\subset W_{loc}^{ss}(\varphi_T(x))$ for any $x\in\Lambda$;
\item $\|\Phi_t|_{T_yW_{loc}^{ss}(x)}\|<\eta$ for all $x\in\Lambda$ and $y\in W_{loc}^{ss}(x)$.
\end{enumerate}
Item 3 guarantees that $d(\varphi_t(x),\varphi_t(y))\to 0$ for any $x\in\Lambda$ and $y\in W_{loc}^{ss}(x)$. For any $y\in\Lambda$ and $z\in W^{ss}_{loc}(y)$, denote by $d_s(y,z)$ the distance of $y,z$ in $W^{ss}_{loc}(y)$. By Item 1 one sees that
$$\frac{d_s(y,z)}{d(y,z)}\to 1 \text{\ \ \ \ as \ \ \ \ } d_s(y,z)\to 0.$$
In what follows we will use the notation $W_r^{ss}(x)$ to denote the disk $\{z\in W_{loc}^{ss}(x): d_s(y,z)<r\}$ for all $x\in\Lambda$. Let
$$W^{ss}(x)=\bigcup_{n=1}^{\infty}\varphi_{-nT}(W^{ss}_{loc}(\varphi_{nT}(x)))$$
be the {\it global} invariant manifold for $x\in\Lambda$. Then we have the following lemma which is \cite[Lemma 3.7]{SVY}.

\begin{Lemma}\label{lem4} The global invariant manifolds $W^{ss}(x)$ have the following properties:
\begin{enumerate}
\item $\varphi_t(W^{ss}(x))=W^{ss}(\varphi_t(x))$ for any $x\in\Lambda$ and $t\in\mathbb{R}$;
\item if $W^{ss}(x)\cap W^{ss}(y)\neq\emptyset$ then $W^{ss}(x)=W^{ss}(y)$ for any $x,y\in\Lambda$.
\end{enumerate}
\end{Lemma}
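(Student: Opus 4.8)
The plan is to reduce both assertions to the forward $\varphi_T$-invariance of the local family $\{W^{ss}_{loc}(x)\}$ (Item 2 of its listed properties) together with the uniform exponential contraction along it (Item 3); the only genuinely flow-specific point will be how to pass from integer multiples of $T$ to arbitrary real time.

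For property (1) I would first prove $\varphi_T(W^{ss}(x)) = W^{ss}(\varphi_T(x))$ by reindexing the defining union: applying $\varphi_T$ term by term turns $\varphi_{-nT}(W^{ss}_{loc}(\varphi_{nT}(x)))$ into $\varphi_{-(n-1)T}(W^{ss}_{loc}(\varphi_{nT}(x)))$, so
\[
\varphi_T\bigl(W^{ss}(x)\bigr) = W^{ss}_{loc}(\varphi_T(x)) \,\cup\, \bigcup_{m\geq 1}\varphi_{-mT}\bigl(W^{ss}_{loc}(\varphi_{mT}(\varphi_T(x)))\bigr).
\]
The union on the right-hand side is exactly $W^{ss}(\varphi_T(x))$, while the extra term $W^{ss}_{loc}(\varphi_T(x))$ is redundant: Item 2 at the point $\varphi_T(x)$ gives $\varphi_T(W^{ss}_{loc}(\varphi_T(x)))\subset W^{ss}_{loc}(\varphi_{2T}(x))$, that is $W^{ss}_{loc}(\varphi_T(x))\subset\varphi_{-T}(W^{ss}_{loc}(\varphi_{2T}(x)))$, which is already the $m=1$ term. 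Hence $\varphi_T(W^{ss}(x)) = W^{ss}(\varphi_T(x))$; composing with $\varphi_{-T}$ (and using that every point of the invariant set $\Lambda$ is a $\varphi_T$-image) and iterating yields $\varphi_{nT}(W^{ss}(x)) = W^{ss}(\varphi_{nT}(x))$ for every $n\in\mathbb Z$. For general $t\in\mathbb R$ write $t=nT+r$ with $r\in[0,T)$; since $\varphi_{nT}$ already intertwines the global manifolds it suffices to treat $t=r$. Here I would use that, by Item 3 and the uniform continuity of $\varphi_r$ over $\Lambda$ for $r$ in the bounded range $[0,T]$, every $z$ with $\varphi_{kT}(z)\in W^{ss}_{loc}(\varphi_{kT}(x))$ satisfies $d(\varphi_s(z),\varphi_s(x))\to 0$ and in fact $\varphi_s(z)\in W^{ss}_{loc}(\varphi_s(x))$ for all large $s$; applying this to $z$ and to $\varphi_r(z)$ and comparing the two defining unions gives $\varphi_r(W^{ss}(x)) = W^{ss}(\varphi_r(x))$. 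Equivalently, one may identify $W^{ss}(x)$ with the manifestly flow-invariant set $\{z:\limsup_{s\to\infty}\tfrac1s\log d(\varphi_s(z),\varphi_s(x))<0\}$ via the invariant manifolds theorem and read off invariance directly.

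For property (2), suppose $z\in W^{ss}(x)\cap W^{ss}(y)$. By definition $\varphi_{n_1T}(z)\in W^{ss}_{loc}(\varphi_{n_1T}(x))$ and $\varphi_{n_2T}(z)\in W^{ss}_{loc}(\varphi_{n_2T}(y))$ for some $n_1,n_2$; taking $n=\max\{n_1,n_2\}$ and pushing forward with Item 2 gives $\varphi_{nT}(z)\in W^{ss}_{loc}(\varphi_{nT}(x))\cap W^{ss}_{loc}(\varphi_{nT}(y))$. By Item 3, $d(\varphi_s(x),\varphi_s(y))\leq d(\varphi_s(x),\varphi_s(z))+d(\varphi_s(z),\varphi_s(y))\to 0$ exponentially, so for $k$ large $\varphi_{kT}(x)$ and $\varphi_{kT}(y)$ lie within the local scale, hence on a common strong-stable leaf, hence $\varphi_{kT}(x)\in W^{ss}_{loc}(\varphi_{kT}(y))$; pushing this forward with Item 2 and substituting into the two defining unions shows $\varphi_{-nT}(W^{ss}_{loc}(\varphi_{nT}(x)))\subset W^{ss}(y)$ for all $n\geq k$ and symmetrically, whence $W^{ss}(x)=W^{ss}(y)$.

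The step I expect to require the most care is the passage to non-integer time in (1) and, closely related, the claim in (2) that two nearby forward-asymptotic points of $\Lambda$ lie on a common local strong-stable leaf: these are not formal consequences of Items 1–3 as literally stated but of the uniqueness clause of the invariant manifolds theorem already invoked to produce the family $\{W^{ss}_{loc}(x)\}$. I would make this precise by fixing at the outset a uniform plaque size together with that uniqueness statement; with it in hand, both parts of the lemma become bookkeeping with the union defining $W^{ss}$.
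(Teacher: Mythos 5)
The paper does not prove this lemma --- the sentence immediately before it says the lemma is Lemma 3.7 of \cite{SVY} (Sumi--Varandas--Yamamoto), so the argument is outsourced to that reference and there is no in-paper proof to compare against. Measured on its own, your main thread is correct: the reindexing giving $\varphi_T(W^{ss}(x))=W^{ss}(\varphi_T(x))$, the reduction of arbitrary $t$ to $t\in[0,T)$, and the triangle inequality in part (2) are all right, and you correctly locate the one genuinely nontrivial ingredient. Both the passage to non-integer time in (1) and the claim in (2) that two forward-asymptotic points of $\Lambda$ eventually share a plaque hinge on the \emph{characterization} (uniqueness) clause of the invariant manifold theorem: $W^{ss}_{loc}(p)$ is exactly the set of points $w$ near $p$ whose forward $\varphi_T$-orbit stays at the local scale and satisfies $d(\varphi_{nT}w,\varphi_{nT}p)\le C\eta^n$. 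That clause is part of the Hirsch--Pugh--Shub / Ma\~n\'e construction that produced the family $\{W^{ss}_{loc}\}$; once it is invoked, Item 3 supplies the $\eta$-rate along a plaque, the uniform Lipschitz bound on $\varphi_r$ for $r\in[0,T]$ propagates it to off-grid times, and the characterization re-identifies the image point as lying on the right plaque.

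One remark in your write-up is wrong, although you offer it only as an ``equivalent'' shortcut and do not rely on it. In the singular hyperbolic setting the set $\{z:\limsup_{s\to\infty}\tfrac1s\log d(\varphi_s(z),\varphi_s(x))<0\}$ is in general strictly larger than $W^{ss}(x)$. The bundle $E^{cu}$ is only required to be area-expanding, so it may carry one negative Lyapunov exponent; indeed along an orbit falling into a singularity the flow direction, which lies in $E^{cu}$ by Lemma \ref{lem2}, contracts. A point converging to the orbit of $x$ at that slower $E^{cu}$-rate would belong to your set without lying on the strong stable leaf. The correct flow-invariant characterization uses a fixed threshold $\eta_0$ chosen strictly between the $E^{ss}$ contraction rate and the slowest rate that $\Phi_t|_{E^{cu}}$ can exhibit, namely $\limsup_{s\to\infty}\tfrac1s\log d(\varphi_s(z),\varphi_s(x))\le\log\eta_0$ together with a proximity requirement; with that threshold replacing $<0$, the reformulation does become the cleanest route to reading off flow invariance.
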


\begin{Lemma}\label{lem5}
$(W^{ss}(\sigma)\setminus\{\sigma\})\cap\Lambda=\emptyset$ for any singularity $\sigma\in\Lambda$.
\end{Lemma}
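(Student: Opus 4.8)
The plan is a proof by contradiction. Suppose, for some singularity $\sigma\in\Lambda$, there is a point $x\in(W^{ss}(\sigma)\setminus\{\sigma\})\cap\Lambda$. I will derive a contradiction by showing that $X(x)$ would then lie simultaneously in $E^{ss}(x)$ and in $E^{cu}(x)$, which is impossible because the splitting is direct.

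First I would check that the whole orbit of $x$ lies in $W^{ss}(\sigma)$ and converges to $\sigma$. Since $\sigma$ is a singularity we have $\varphi_t(\sigma)=\sigma$, so item 1 of Lemma \ref{lem4} gives $\varphi_t(W^{ss}(\sigma))=W^{ss}(\varphi_t(\sigma))=W^{ss}(\sigma)$ for every $t\in\mathbb{R}$, whence ${\rm Orb}(x)\subset W^{ss}(\sigma)$. Moreover, by the definition $W^{ss}(\sigma)=\bigcup_{n\geq1}\varphi_{-nT}(W_{loc}^{ss}(\sigma))$ there is $n\geq1$ with $\varphi_{nT}(x)\in W_{loc}^{ss}(\sigma)$, and then the contraction along the local strong stable disk (the consequence of property (3) of the construction, namely $d(\varphi_t(z),\varphi_t(\sigma))\to0$ for $z\in W_{loc}^{ss}(\sigma)$) yields $\varphi_t(x)\to\sigma$ as $t\to+\infty$. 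In particular $x$ is not a singularity, for otherwise $\varphi_t(x)\equiv x$ would force $x=\sigma$; hence $X(x)\neq0$.

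Next I would identify $T_xW^{ss}(\sigma)$. Since $x\in W^{ss}(\sigma)$ and also $x\in W^{ss}(x)$, item 2 of Lemma \ref{lem4} gives $W^{ss}(\sigma)=W^{ss}(x)$. Because $x\in\Lambda$, the leaf $W^{ss}(x)$ contains the $C^1$ local disk $W_{loc}^{ss}(x)$ as a neighbourhood of $x$, and property (1) of the construction gives $T_xW_{loc}^{ss}(x)=E^{ss}(x)$; therefore $T_xW^{ss}(\sigma)=E^{ss}(x)$. Since ${\rm Orb}(x)\subset W^{ss}(\sigma)$ and $X(x)\neq0$, the vector $X(x)$ is a nonzero element of $T_xW^{ss}(\sigma)=E^{ss}(x)$. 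But Lemma \ref{lem2} gives $X(x)\in E^{cu}(x)$, and $E^{ss}(x)\cap E^{cu}(x)=\{0\}$: contradiction. This proves the lemma.

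I do not expect a serious obstacle. The only point requiring some care is the identification $T_xW^{ss}(\sigma)=E^{ss}(x)$, which rests on $x$ belonging to $\Lambda$ (so that the $C^1$ local strong stable disk through $x$ with tangent space $E^{ss}(x)$ is available) and on Lemma \ref{lem4}(2), used to replace $W^{ss}(\sigma)$ by $W^{ss}(x)$ near $x$. Everything else is a straightforward combination of the flow-invariance of the strong stable leaves, their forward convergence to $\sigma$, and Lemma \ref{lem2}.
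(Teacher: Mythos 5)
Your proof is correct and follows the same route as the paper's: flow-invariance puts $\mathrm{Orb}(x)\subset W^{ss}(\sigma)$ so $X(x)\in T_xW^{ss}(\sigma)=T_xW^{ss}(x)=E^{ss}(x)$, while Lemma \ref{lem2} forces $X(x)\in E^{cu}(x)$, a contradiction. You merely spell out a few details (the justification of $X(x)\neq 0$ and the use of Lemma \ref{lem4}(2) to identify $W^{ss}(\sigma)$ with $W^{ss}(x)$) that the paper leaves implicit.
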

\begin{proof}
Assume the contrary that there is a point $x\in (W^{ss}(\sigma)\setminus\{\sigma\})\cap\Lambda$. Note that $X(x)\not= 0$.  By the invariance of $W^{ss}(\sigma)$ we know that ${\rm Orb}(x)\subset W^{ss}(\sigma)$ and hence $X(x)\in T_x W^{ss}(\sigma)=T_x(W^{ss}(x))$. By Item 1, $T_x(W^{ss}(x))=E^{ss}(x)$. Then $X(x)\in E^{ss}(x)$. On the other hand, by Lemma \ref{lem2} we have $X(x)\in E^{cu}(x)$. This is a contradiction, proving $(W^{ss}(\sigma)\setminus\{\sigma\})\cap\Lambda=\emptyset$.
\end{proof}

\begin{Lemma}\label{lem7}
Let $\sigma\in\Lambda$ be a singularity and $z\in W^s(\sigma)\cap\Lambda$. For any $y\in Orb(z)$, if $y\neq z$, then $W^{ss}_{loc}(y)\cap W^{ss}_{loc}(z)=\emptyset$.
\end{Lemma}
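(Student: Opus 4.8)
The plan is to argue by contradiction. Suppose there is $w\in W^{ss}_{loc}(y)\cap W^{ss}_{loc}(z)$, and write $y=\varphi_\tau(z)$; since the forward orbit of $z$ converges to $\sigma$ it is not periodic, so $y\neq z$ forces $\tau\neq 0$. We may assume $z\neq\sigma$, since otherwise $\mathrm{Orb}(z)=\{\sigma\}$ and there is nothing to prove. By the remark following the construction of the local strong stable manifolds, $w\in W^{ss}(z)$ gives $d(\varphi_t(w),\varphi_t(z))\to 0$ and $w\in W^{ss}(y)$ gives $d(\varphi_t(w),\varphi_{t+\tau}(z))\to 0$ as $t\to+\infty$, hence
\[
d(\varphi_t(z),\varphi_{t+\tau}(z))\longrightarrow 0\qquad(t\to+\infty).
\]
The contradiction will come from two incompatible estimates on the speed of this convergence.

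\emph{A lower bound of order $e^{-\mu t}$.} Since $z\in(W^s(\sigma)\setminus\{\sigma\})\cap\Lambda$, Lemma \ref{lem3} gives $\mathrm{Ind}(\sigma)=\dim E^{ss}+1$, so writing $T_\sigma M=E^s_\sigma\oplus E^u_\sigma$ we have $E^s_\sigma=E^{ss}_\sigma\oplus E^c_\sigma$ with $E^c_\sigma:=E^s_\sigma\cap E^{cu}_\sigma$ a one-dimensional $DX(\sigma)$-invariant line, hence an eigenline with a real eigenvalue $-\mu<0$. The subbundle $E^{ss}$ integrates to a $C^1$, forward-invariant strong stable subfoliation of $W^s_{loc}(\sigma)$, so on the one-dimensional quotient the flow has $\sigma$ as a hyperbolic attracting fixed point with exponent $-\mu$; choosing a $C^1$ coordinate $\xi$ on $W^s_{loc}(\sigma)$ vanishing exactly on $W^{ss}_{loc}(\sigma)$, the function $t\mapsto\xi(\varphi_t(z))$ satisfies $\dot\xi=-\mu\xi(1+o(1))$, whence $\xi(\varphi_t(z))=Ce^{-\mu t}(1+o(1))$. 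Here $C\neq 0$, because $z\notin W^{ss}(\sigma)$ by Lemma \ref{lem5} (as $z\in\Lambda$, $z\neq\sigma$), so $\xi(z)\neq 0$. Therefore $\xi(\varphi_{t+\tau}(z))-\xi(\varphi_t(z))=C(e^{-\mu\tau}-1)e^{-\mu t}(1+o(1))$, and since $\tau\neq 0$ and $\xi$ is Lipschitz, $d(\varphi_t(z),\varphi_{t+\tau}(z))\ge c\,e^{-\mu t}$ for some $c>0$ and all large $t$.

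\emph{An upper bound that beats $e^{-\mu t}$.} By the choice of $T$ and $\eta$ at the start of the section, $\|\Phi_T|_{E^{ss}(x)}\|\cdot\|\Phi_{-T}|_{E^{cu}(\varphi_T x)}\|<\eta$ for all $x\in\Lambda$; at $x=\sigma$ one has $\|\Phi_{-T}|_{E^{cu}_\sigma}\|\ge\|\Phi_{-T}|_{E^c_\sigma}\|=e^{\mu T}$, so $\|\Phi_T|_{E^{ss}_\sigma}\|<\eta e^{-\mu T}$. By continuity of the splitting and of the tangent spaces of the local strong stable disks, there are a neighbourhood $U$ of $\sigma$ and $\rho>0$ such that $\|\Phi_T|_{T_v W^{ss}_{loc}(x)}\|<\eta e^{-\mu T}$ whenever $x\in\Lambda\cap U$ and $v\in W^{ss}_{loc}(x)$ with $d_s(v,x)<\rho$. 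Now $\varphi_{nT}(z)\to\sigma$ and $\varphi_{nT}(y)=\varphi_{nT+\tau}(z)\to\sigma$, and since $w\in W^{ss}(z)\cap W^{ss}(y)$, Item 2 of the local manifolds (iterated) gives $\varphi_{nT}(w)\in W^{ss}_{loc}(\varphi_{nT}(z))\cap W^{ss}_{loc}(\varphi_{nT}(y))$ for all large $n$, with both $d_s(\varphi_{nT}(w),\varphi_{nT}(z))$ and $d_s(\varphi_{nT}(w),\varphi_{nT+\tau}(z))$ tending to $0$. Hence the one-step estimate above applies from some $n_1$ on and yields $d(\varphi_{nT}(w),\varphi_{nT}(z))\le K(\eta e^{-\mu T})^n$ and $d(\varphi_{nT}(w),\varphi_{nT+\tau}(z))\le K(\eta e^{-\mu T})^n$ for a constant $K$, so $d(\varphi_{nT}(z),\varphi_{nT+\tau}(z))\le 2K\eta^n e^{-\mu nT}$, contradicting the lower bound $c\,e^{-\mu nT}$ once $n$ is large. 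This proves the lemma.

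The step I expect to be the main obstacle is the lower bound: one must upgrade the mere hyperbolicity of $\sigma$ to the sharp asymptotics $\xi(\varphi_t(z))\asymp e^{-\mu t}$ along the weak stable direction, which relies on $E^c_\sigma$ being one-dimensional (Lemma \ref{lem3}), on the $C^1$ flow-invariant strong stable subfoliation of $W^s_{loc}(\sigma)$, and on $z\notin W^{ss}(\sigma)$ (Lemma \ref{lem5}). The upper bound is essentially bookkeeping once one observes that the defining inequalities for $T$ and $\eta$, read at the singularity, already force the strong stable contraction near $\sigma$ to be strictly faster than $e^{-\mu t}$.
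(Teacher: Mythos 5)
Your overall strategy --- a contradiction between an upper and a lower estimate on $d(\varphi_t(z),\varphi_{t+\tau}(z))$, with the upper estimate coming from the $\eta$-factor in the domination at $\sigma$ --- is the same in spirit as the paper's, and your upper-bound step is essentially sound. The gap is in the lower bound. First, the asserted existence of a $C^1$ coordinate $\xi$ on $W^s_{loc}(\sigma)$ vanishing exactly on $W^{ss}_{loc}(\sigma)$ amounts to the strong-stable subfoliation of $W^s_{loc}(\sigma)$ being $C^1$; for a $C^1$ vector field this is not automatic (it requires a bunching condition between the $E^{ss}$ and $E^c$ rates that singular hyperbolicity does not supply), so the quotient flow need not be $C^1$ and the ODE $\dot\xi=-\mu\xi(1+o(1))$ is not justified. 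Second, even granting that ODE, the deduction ``whence $\xi(\varphi_t(z))=Ce^{-\mu t}(1+o(1))$ with $C\neq0$'' does not follow: integrating $\frac{d}{dt}\ln|\xi|=-\mu(1+o(1))$ only gives $\xi(\varphi_t(z))=e^{-\mu t(1+o(1))}$, and the limit $\lim_{t\to\infty}e^{\mu t}\xi(\varphi_t(z))$ can be $0$ or $+\infty$ when the $o(1)$ term is not integrable. (The final contradiction you want would actually survive with the weaker $e^{-\mu t(1+o(1))}$, because your upper bound carries a geometric $\eta^n$ that beats any sub-exponential correction, but that is not the argument you wrote, and the first issue would still remain.)

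The paper avoids both difficulties by a direct comparison of curve lengths that never identifies the rate $e^{-\mu t}$. It pushes forward by $\varphi_t$ three arcs: the orbit arc $\gamma_0=\varphi_{[0,\tau]}(y)$, whose tangent is the flow direction and hence close to $E^c(\sigma)$ near $\sigma$, and two arcs $\gamma_1\subset W^{ss}_{loc}(y)$, $\gamma_2\subset W^{ss}_{loc}(z)$ joining $y$ and $z$ to the common point $\zeta$, with tangents close to $E^{ss}(\sigma)$. The domination of $E^{ss}\oplus E^{c}$ alone gives $l(\varphi_t(\gamma_i))/l(\varphi_t(\gamma_0))\to0$ for $i=1,2$, and the triangle inequality $d(\varphi_t(y),\varphi_t(z))\le d(\varphi_t(y),\varphi_t(\zeta))+d(\varphi_t(z),\varphi_t(\zeta))$ then produces the contradiction. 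This uses only the $C^1$-ness of each individual strong-stable leaf, which the invariant manifold theorem does give, and no smoothness of the foliation transverse to the leaves.
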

\begin{proof}
Assume the contrary that there are distinct points $y,z$ in the orbit of $z\in W^s(\sigma)$ with $W^{ss}_{loc}(y)\cap W^{ss}_{loc}(z)\neq\emptyset$. Without loss of generality, we can assume that $z=\varphi_\tau(y)$ with $\tau>0$. It is easy to see that  $W^{ss}_{loc}(y)\cap W^{ss}_{loc}(z)\neq\emptyset$ implies $W^{ss}_{loc}(\varphi_{nT}(y))\cap W^{ss}_{loc}(\varphi_{nT}(z))\neq\emptyset$ for all $n\geq0$.  Since $y,z\in W^s(\sigma)$, the two points $\varphi_t(y),\varphi_t(z)$ will be arbitrarily close to $\sigma$ for $t$ big enough. By Lemma \ref{lem5}, $y,z\notin W^{ss}(\sigma)$. Replacing $y,z$ by $\varphi_{nT}(y), \varphi_{nT}(z)$ if necessary, we can assume that $X(\varphi_t(y))$ is close to $E^c(\sigma)=E^{cu}(\sigma)\cap E^s_\sigma$ for all $t\in[0,\tau]$. Let $\gamma_0=\varphi_{[0,\tau]}(y)$. Let $\zeta\in W^{ss}_{loc}(y)\cap W^{ss}_{loc}(z)$. We can find two curves $\gamma_1\subset W^{ss}_{loc}(y)$ connecting $y, \zeta$ and $\gamma_2\subset W^{ss}_{loc}(z)$ connecting $z,\zeta$. Without loss of generality we assume that the tangent space of $\gamma_1$ (and $\gamma_2$ too) is close to $E^{ss}(\sigma)$. Denote by $l(\gamma)$ the length of the curve $\gamma$. By the domination of the splitting  $E^{ss}(\sigma)\oplus E^c(\sigma)$ we see that
$$\frac{l(\varphi_t(\gamma_1))}{l(\varphi_t(\gamma_0))}\to 0  \text{\ \ \ and\ \ \ }\frac{l(\varphi_t(\gamma_2))}{l(\varphi_t(\gamma_0))}\to 0$$
as $t\to+\infty$. Then  for $t$ big enough, $l(\varphi_t(\gamma_0))$ is close to $d(\varphi_t(y),\varphi_t(z))$, $l(\varphi_t(\gamma_1))$ is close to $d(\varphi_t(y),\varphi_t(\zeta))$, and $l(\varphi_t(\gamma_2))$ is close to $d(\varphi_t(z),\varphi_t(\zeta))$. Thus we have
$$\frac{d(\varphi_t(y),\varphi_t(\zeta))+d(\varphi_t(z),\varphi_t(\zeta))}{d(\varphi_t(y),\varphi_t(z))}\to0$$
as $t\to+\infty$. This is a contradiction.
\end{proof}

\begin{Proposition}\label{mainprop}
Let $\sigma$ be a singularity with a homoclinic point $x_0\in (W^s(\sigma)\cap W^u(\sigma))\setminus\{\sigma\}$. If $\Gamma=Orb(x_0)\cup\{\sigma\}$ is singular hyperbolic, then $\Gamma$ does not have the shadowing property.
\end{Proposition}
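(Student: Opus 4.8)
The approach I would take is the following. I may assume $\Gamma$ is \emph{positively} singular hyperbolic, with splitting $T_\Gamma M=E^{ss}\oplus E^{cu}$. Since $x_0\in(W^s(\sigma)\setminus\{\sigma\})\cap\Gamma$, Lemma \ref{lem3} gives $Ind(\sigma)=\dim E^{ss}+1$, so at $\sigma$ we have $E^s_\sigma=E^{ss}_\sigma\oplus E^c_\sigma$ with $E^c_\sigma=E^{cu}_\sigma\cap E^s_\sigma$ one--dimensional and $E^u_\sigma\subset E^{cu}_\sigma$. By Lemma \ref{lem2} the vector field lies in $E^{cu}$ along $Orb(x_0)$, which together with the invariance of the stable and unstable manifolds forces the flow line $\langle X(\varphi_t(x_0))\rangle$ to converge to $E^c_\sigma$ as $t\to+\infty$ and into $E^u_\sigma$ as $t\to-\infty$; in particular, consistently with Lemma \ref{lem5}, $Orb(x_0)$ approaches $\sigma$ along the \emph{weak} stable direction, transversely to $W^{ss}(\sigma)$. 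Restricting the area--expansion hypothesis (iii) to a plane in $E^c_\sigma\oplus E^u_\sigma$ containing $E^c_\sigma$ shows that the modulus $\mu^c>0$ of the unique weak stable eigenvalue of $\sigma$ is strictly less than the modulus $\mu^u$ of the weakest unstable eigenvalue, while domination shows $\mu^c$ is strictly less than every strong stable rate. Thus $\Gamma$ is a homoclinic loop of Shilnikov type, and the whole point will be that along $\Gamma$ the return dynamics near $\sigma$ expands transversely to $W^s(\sigma)$ faster than it contracts in the weak direction inside $W^s(\sigma)$.

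Suppose, for contradiction, that $\Gamma$ has the shadowing property. I would fix a small adapted neighbourhood of $\sigma$ in which the bundles $E^{ss},E^{cu}$, the strong stable disks $W^{ss}_{loc}(\cdot)$ and a plaque family tangent to $E^{cu}$ provide coordinates $(\xi,c,u)\in E^{ss}_\sigma\times E^c_\sigma\times E^u_\sigma$ with sharp growth estimates; then a radius $\rho>0$ with $\overline{B(\sigma,\rho)}$ inside it; then $\varepsilon>0$ much smaller than $\rho$ and than the structural constants below; and finally $\delta=\delta(\varepsilon)>0$ from shadowing. With $0<\eta<\delta/2$, let $a>0$ be the first time $\varphi_t(x_0)$ enters $B(\sigma,\eta)$ and $-b<0$ the last time (as $t$ increases from $-\infty$) that $\varphi_t(x_0)\in\partial B(\sigma,\eta)$; then $d(\varphi_a(x_0),\varphi_{-b}(x_0))\le 2\eta<\delta$. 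Repeating the orbit arc $\varphi_{[-b,a]}(x_0)$ bi--infinitely (subdivided into pieces of time $\ge1$) gives a $\delta$--pseudo orbit in $\Gamma$ wrapping around $\Gamma$ infinitely often, at distance $\eta$ from $\sigma$ on each loop. Let $z$ and $\theta$ shadow it within $\varepsilon$; then $Orb(z)$ stays $\varepsilon$--close to $\Gamma$, and since $z\ne\sigma$ while the pseudo orbit is bounded away from $\sigma$ in the middle of each loop, $Orb(z)$ genuinely leaves $B(\sigma,\rho)$ and re--enters it, once per loop.

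On the $i$--th loop the orbit of $z$ comes within $\varepsilon$ of the stable--tendril crossing $p^s:=Orb(x_0)\cap\partial B(\sigma,\rho)$; call that orbit point $q_i=(\xi_i,c_i,u_i)$, so $c_i$ is close to $\rho$ and $|\xi_i|,|u_i|$ are of order at most $\varepsilon$. Two facts collide. First, since $z$ must also come within $\varepsilon$ of $\varphi_a(x_0)$ (at distance $\eta<\varepsilon$ from $\sigma$), the forward orbit of $q_i$ passes within $2\varepsilon$ of $\sigma$; but the minimal distance to $\sigma$ of the orbit of $(\xi_i,c_i,u_i)$ is at least a fixed constant times $\rho(|u_i|/\rho)^{\mu^c/(\mu^c+\mu^u)}$, so $|u_i|\le u_{\mathrm{bound}}$ for a $u_{\mathrm{bound}}$ equal to a fixed constant times $\rho(\varepsilon/\rho)^{(\mu^c+\mu^u)/\mu^c}$, which tends to $0$ as $\varepsilon\to0$. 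Second, the passage $q_i\mapsto q_{i+1}$ of $Orb(z)$ — inward toward $\sigma$, then the turn forced by the jump of the pseudo orbit across $\sigma$, then outward near the unstable tendril, then once around the fixed compact ``global'' arc of $\Gamma$ — is the composition of the local saddle passage with a fixed diffeomorphism $G$ along that arc. Computing its leading behaviour — this is where Lemmas \ref{lem4} and \ref{lem7} enter, to coordinatize $Orb(z)$ by the strong stable foliation along the stable tendril, and where one uses that $G$ preserves $E^{ss}$ and $E^{cu}$ while $X(p^u)$ is transverse to the $E^c_\sigma$-direction (because $\langle X(\varphi_t(x_0))\rangle\to E^u_\sigma\ne E^c_\sigma$) — one gets $|u_{i+1}|\ge C_0|u_i|^{b}$ with $b=\mu^c/\mu^u<1$ (the exponent being $<1$ is exactly area--expansion) and $C_0>0$ \emph{independent of $\varepsilon$}. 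On $(0,u_{\mathrm{bound}}]$ the map $t\mapsto C_0t^{b}$ then strictly exceeds the identity once $u_{\mathrm{bound}}<C_0^{1/(1-b)}$, which holds for $\varepsilon$ small; hence $(|u_i|)_{i\ge0}$ is strictly increasing and bounded by $u_{\mathrm{bound}}$, so its limit $L$ satisfies $L\ge C_0L^{b}>L$ — impossible. (If $|u_i|>u_{\mathrm{bound}}$ already for some $i$, that directly contradicts the first fact.) Therefore $\Gamma$ does not have the shadowing property.

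The step I expect to be the main obstacle is the second one: making the local return--map estimate $|u_{i+1}|\ge C_0|u_i|^{\mu^c/\mu^u}$ rigorous near the singularity in the $C^1$ category, where no smooth linearization is available — i.e. assembling the $E^{ss}$- and $E^{cu}$-invariant families supplied by Lemmas \ref{lem4}–\ref{lem7} into genuine adapted coordinates along $Orb(z)$, and reading off the exponent from the spectral data of $\sigma$ with only $C^1$ control and uniformly in $\varepsilon$. By contrast, the first fact is a routine estimate with the hyperbolic splitting at $\sigma$, and the pseudo orbit construction is elementary.
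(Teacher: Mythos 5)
Your proposal takes a genuinely different route from the paper's.  The paper constructs the simplest possible pseudo orbit — stay at $\sigma$ for all negative integer times, make a single circuit along $\mathrm{Orb}(x_0)$, then stay at $\sigma$ for all positive integer times — so the shadowing point $z$ is forced to be a homoclinic point of $\sigma$, shown not to lie on $\Gamma$ by a small argument with the flow time along $\mathrm{Orb}(x_0)$ (their Claim 1).  Then, since $\dim W^{ss}=\dim W^s(\sigma)-1$, the union $\bigcup_{t}W^{ss}_r(\varphi_t(x_0))$ is (by Brouwer's invariance of domain) a neighbourhood of $x_0$ inside $W^s_{\epsilon_0}(\sigma)$, so $z$ must lie on a nontrivial strong stable leaf $W^{ss}_r(\varphi_t(x_0))$; iterating $\varphi_{-nT}$ expands that leaf past radius $r$ and expels $\varphi_{-nT}(z)$ from a fixed neighbourhood $U$ of $\Gamma$, contradicting shadowing.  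The entire contradiction is topological and uses only the strong stable foliation (their Lemmas 2.4–2.7) and that $E^c$ is one-dimensional; no spectral data, no return map, no cone fields.  Your version uses a pseudo orbit that winds around the loop bi-infinitely and derives a contradiction from a quantitative Shilnikov return-map iteration $|u_{i+1}|\ge C_0|u_i|^{\mu^c/\mu^u}$, turning the area-expansion hypothesis into the inequality $\mu^c<\mu^u$ that makes the exponent $<1$.  Both pseudo orbits are legitimate, and your overall scheme (uniform lower bound on the closest approach to $\sigma$, and superlinear growth of the entry $u$-coordinate) is conceptually sound.

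That said, there is a genuine gap in the second step, and it is larger than you suggest.  Two points are unproved and not routine.  First, both the lower bound on the closest approach to $\sigma$ and the lower bound $|c'_{\mathrm{exit}}|\gtrsim\rho(|u_i|/\rho)^{\mu^c/\mu^u}$ are power-law passage estimates near a saddle; they are standard under $C^{1,\alpha}$ or $C^2$ linearization, but in the $C^1$ category you only have a dominated splitting and an invariant $E^{ss}$ lamination, and one-sided sharp-exponent bounds through the saddle region need a careful cone/graph-transform argument — this is precisely the part where the exact exponent $\mu^c/\mu^u$ must be replaced by something slightly worse, and you must then re-verify $b<1$.  Second, the inequality $|u_{i+1}|\ge C_0|c'_i|$ is not automatic: it asserts that the derivative of the global map $G$ sends the exit ``$c'$'' direction nondegenerately into the entry ``$u$'' direction.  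What $G$ actually preserves is $W^s(\sigma)$, and near $p^u$ the intersection $W^s(\sigma)\cap\Sigma^{out}$ is (locally) $W^{ss}_{\mathrm{loc}}(p^u)\cap\Sigma^{out}$, tangent to $E^{ss}$; since $G$ carries this to $\{u=0\}\subset\Sigma^{in}$ and $\xi'_i,u'_i$ decay strictly faster than $c'_i$ by domination, the conclusion can be extracted — but this is exactly the invariance-of-domain/foliation argument that the paper isolates cleanly in Lemmas 2.4–2.7 and uses directly, with no power-law estimates at all.  In short: your route works in spirit but imports the whole Shilnikov machinery where the paper gets by with the strong stable foliation alone, and the $C^1$-level justification of your return-map inequality is the substantive missing piece.
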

\begin{proof}
Without loss of generality we assume that $\Gamma$ is positively singular hyperbolic with singular hyperbolic splitting $E^{ss}\oplus E^{cu}$. Then $Ind(\sigma)=\dim(E^{ss})+1$ by Lemma \ref{lem3}. Denote by $W^{ss}(x)$ the strong stable manifold tangent to $E^{ss}(x)$ at any $x\in\Gamma$.
Since $\dim W^{ss}(\sigma)= \dim W^s_\sigma-1$, we can choose $\epsilon_0>0$ such that the locally stable manifold is given by
$$W_{\epsilon_0}^s(\sigma)=\{x : d(\varphi_t(x),\varphi_t(\sigma))\leq \epsilon_0, \forall t\geq 0\}.$$
Without loss of generality, we can assume that  $d(\varphi_t(x_0),\sigma)<\epsilon_0/2$ for all $t\geq 0$.

\begin{Claim-numbered}
There is $\varepsilon_1>0$ such that for any $\tau\in\mathbb{R}$, if $d(\varphi_\tau(x_0),x_0)<\varepsilon_1$ then $d(\varphi_t(x_0),\sigma)>\varepsilon_1$ for any $t$ in the interval between $0$ and $\tau$.
\end{Claim-numbered}

In fact, we can take $\varepsilon_1'>0$ such that $B(x_0,\varepsilon_1')\cap B(\sigma,\varepsilon_1')=\emptyset$. Let $\tau_1=\sup\{t>0: \varphi_t(x_0)\in B(x_0,\varepsilon_1')\}$ and $\tau_2=\inf\{t<0,\varphi_t(x_0)\in B(x_0,\varepsilon_1')\}$. Since $\varphi_{t\to\infty}(x_0)\to\sigma$, $\tau_1,\tau_2$ exist and $\tau_1<0$, $\tau_2>0$. Choose $0<\varepsilon_1<\varepsilon_1'$ such that $B(\sigma,\varepsilon_1)\cap\varphi_{[\tau_2,\tau_1]}(x_0)=\emptyset$. If $d(\varphi_\tau(x_0), x_0)<\varepsilon_1$, then $\tau\in[\tau_2,\tau_1]$ by the fact that $d(\varphi_\tau(x_0), x_0)<\varepsilon_1'$. By the choice of $\varepsilon_1$ we see that $d(\varphi_t(x_0),\sigma)>\varepsilon_1$ for any $t\in[0,\tau]$. This ends the proof of Claim 1.

\vskip 0.2cm

Since $W_{loc}^{ss}$ is expanding for $\varphi_{-T}$ and we have also an upper bound for $\|D\varphi^{-T}\|$, there is $r>0$ such that for any $x\in\Gamma$ and any $y\in W_r^{ss}(x)\setminus\{x\}$, one can find $n>0$ such that $\varphi_{-nT}(y)\in W_{loc}^{ss}(f^{-n}(x))\setminus W_r^{ss}(f^{-n}(x))$. Denoted $CW^s_r=\bigcup_{x\in \Gamma}(W^{ss}_{loc}(x)\setminus W_r^{ss}(x))$. From Lemma \ref{lem7} we know $\Gamma\cap CW^s_r=\emptyset$. Since $\Gamma, CW_r^s$ are compact, we can find a neighborhood $U$ of $\Gamma$ such that $U\cap CW_r^s=\emptyset$. There is $\varepsilon_2>0$ such that $B(y,\varepsilon_2)\subset U$ for all $y\in \Gamma$.

Note that $\dim W^{ss}(x)=\dim W^s(\sigma)-1$ for all $x\in\Gamma$ and $W^{ss}(x)$ varies continuously on $x$. By applying the Brouwer's invariance of domain theorem, for a given $\tau>0$, the set
$$\bigcup_{t\in(-\tau,\tau)}W_{r}^{ss}(\varphi_t(x_0))$$
forms a neighborhood of $x_0$ in $W_{\epsilon_0}^s(\sigma)$. Hence we can take $\varepsilon_3>0$ such that for any point $y\in W_{\epsilon_0}^s(\sigma)$ with $d(y,x_0)<\varepsilon_3$, one can find some $t\in\mathbb{R}$ such that $y\in W_{r}^{ss}(\varphi_t(x_0))$.

Let $\varepsilon_0=\min\{\epsilon_0/2, \varepsilon_1,\varepsilon_2,\varepsilon_3\}$. We will show that for any $\delta>0$, there is a $\delta$-pseudo orbit that can not be $\varepsilon_0$ shadowed by a true orbit.

Let $\delta>0$ be given. Without loss of generality we assume  $\delta<\varepsilon_0$. We can find a point $y_0$ in the negative orbit of $x_0$ such that  $d(y_0,\sigma)<\delta$, and then a positive number $t_0$ big enough such that $\varphi_{t_0}(y_0)$ is in the positive orbit of $x_0$ and  $d(\varphi_{t_0}(y_0),\sigma)<\delta$. Let $y_1=\sigma, t_1=1$ and $y_2=y_0, t_2=t_0$ and $y_i=\sigma, t_i=1$ for $i\neq 0,1,2$. One can easily check that $\{(y_i, t_i) :i\in\mathbb{Z}\}$ is a $\delta$-pseudo orbit. We will show that this pseudo orbit can not be $\varepsilon_0$ shadowed by a true orbit.

Assume the contrary that there is $z\in M$ and an increasing homeomorphism $\theta:\mathbb{R}\to\mathbb{R}$ such that $d(\varphi_{t-s_i}(y_i), \varphi_{\theta(t)}(z))<\varepsilon_0$ for all $t\in[s_i,s_{i+1}]$ where $s_i$ is a sequence with $s_0=0$ and $s_i-s_{i-1}=t_i$ for all $i$. One can easily check that $d(\varphi_{\theta(t)}(z),\sigma)<\varepsilon_0$ for any $t>2t_0+1$ and any $t<0$. By the choice of $\varepsilon_0\leq\epsilon_0$ we can see that $\varphi_{\theta(2t_0+1)}(z)\in W_{\epsilon_0}^s(\sigma)$ and $\varphi_{\theta(0)}(z)\in W_{\epsilon_0}^u(\sigma)$. This implies that $z\in W^{s}(\sigma)\cap W^u(\sigma)$.

\begin{Claim-numbered}
$z$ is not contained in $\Gamma$.
\end{Claim-numbered}

In fact, by the construction of the pseudo orbit,  there is $0<\tau_1<t_0$ such that $x_0=\varphi_{\tau_1}(y_0)$ and hence $x_0=\varphi_{\tau_1}(y_2)$ Thus $d(x_0, \varphi_{\theta(\tau_1)}(z))<\varepsilon_0$, $d(\sigma, \varphi_{\theta(t_0)}(z))<\varepsilon_0$, and $d(x_0, \varphi_{\theta(t_0+1+\tau_1)}(z))<\varepsilon_0$. Since $\theta$ is an increasing homeomorphism we know that $\theta(\tau_1)<\theta(t_0)<\theta(t_0+1+\tau_1)$. Since $d(x_0,\sigma)>\varepsilon_1\geq\varepsilon_0$, $z$ can not be $\sigma$ because the orbit of $z$ cross the $\varepsilon_0$ neighborhood of $x_0$. Now we prove that $z$ is not contained in the orbit of $x_0$. Assume that $z$ is in the orbit of $x_0$ and $z=\varphi_s(x_0)$. Then we can find $$\theta_1=s+\theta(\tau_1)<\theta_2=s+\theta(t_0)<\theta_3=s+\theta(t_0+1+\tau_1)$$ such that $d(\varphi_{\theta_1}(x_0),x_0)<\varepsilon_0, d(\varphi_{\theta_2}(x_0),\sigma)<\varepsilon_0, d(\varphi_{\theta_3}(x_0),x_0)<\varepsilon_0$. If $\theta_3\leq 0$, then $\theta_2\in[\theta_1,0]$. At the same time we have $d(\varphi_{\theta_2}(x_0),\sigma)<\varepsilon_0\leq\varepsilon_1$ and $d(\varphi_{\theta_1}(x_0),x_0)<\varepsilon_0\leq\varepsilon_1$, this contradicts Claim 1. If $\theta_1\geq 0$, then we have $0<\theta_2<\theta_3$ and $d(\varphi_{\theta_2}(x_0),\sigma)<\varepsilon_0$ and $d(\varphi_{\theta_3}(x_0),x_0)<\varepsilon_0$, also contradicting Claim 1. Similarly, if we have $\theta_1<0<\theta_3$, then either $\theta_2\in[\theta_1,0]$ or $\theta_2\in[0,\theta_3]$, contradicting Claim 1. This proves that $z$ is not in the orbit of $x_0$, proving Claim 2.

Note that we have chosen $x_0$ with the property $d(\varphi_t(x_0),\sigma)<\epsilon_0/2$ for all $t>0$, hence $d(\varphi_{\theta(t)}(z),\sigma)<\epsilon_0$ for all $t>t_0+1+\tau_1$. This implies that $\varphi_{\theta(t_0+1+\tau_1)}(z)\in W_{\epsilon_0}^s(\sigma)$. By the choice of $\varepsilon_0\leq\varepsilon_3$ we know that $\varphi_{\theta(t_0+1+\tau_1)}(z)\in W_r^{ss}(\varphi_t(x_0))$ for some $t\in\mathbb{R}$. By Claim 2, $z'=\varphi_{\theta(t_0+1+\tau_1)}(z)\neq\varphi_t(x_0)$, hence there is $n>0$ such that $\varphi_{-nT}(z')\in W^{ss}_{loc}(\varphi_{-nT}(\varphi_t(x_0))\setminus W^{ss}_{r}(\varphi_{-nT}(\varphi_t(x_0))$. This implies that $\varphi_{-nT}(z)$ is not in the $\varepsilon_0$ neighborhood of $\Gamma$. This contradicts the shadowing property and ends the proof of Proposition \ref{mainprop}.
\end{proof}

\paragraph{Proof of Theorem A}
Let $\Lambda$ be a singular hyperbolic chain transitive set of $X$ and $\sigma\in\Lambda$ be a singularity. Without loss of generality we assume that $\Lambda$ is positively singular hyperbolic with splitting $E^{ss}\oplus E^{cu}$. Assume $\Lambda$ has the intrinsic shadowing property. By Lemma \ref{lem6},  there is a point $x_0\in (W^s(\sigma)\cap W^u(\sigma)\cap\Lambda)\setminus\{\sigma\}$. It is easy to see that $\Gamma=Orb(x_0)\cup\{\sigma\}$ is singular hyperbolic and has the shadowing property, contradicting Proposition \ref{mainprop}. This ends the proof of Theorem A.

\section{The Proof of Theorem B}

In this section we prove that if a star flow $X$ has the shadowing property on its chain recurrent set, then $X$ satisfies Axiom A and no cycle condition.

Let us recall the definition of the linear Poincar\'e flow. For $x\in M\setminus Sing(X)$, set
$$N_x=N^X_x=\{v\in T_xM: v\perp X(x)\}.$$
Consider the bundle
$$N_{M\setminus Sing(X)}=N^X_{M\setminus Sing(X)}=\bigcup_{x\in M\setminus Sing(X)}N^X_x$$
over $M\setminus Sing(X)$. Then we define $$\psi_t=\psi^X_t: N_{M\setminus Sing(X)}\to N_{M\setminus Sing(X)}$$ by letting
$$\psi^X_t(v)=\Phi_t(v)-<v, X(\varphi_t(x))>\frac{X(\varphi_t(x))}{\|X(\varphi_t(x))\|}$$
for every $v\in N$. $\psi_t$ is called the {\it linear Poincar\'e flow } with respect to $X$. Let $\Lambda$ be a compact invariant set of $\varphi_t$. We say that $\Lambda\setminus Sing(X)$ admits a {\it dominated splitting} $N_{\Lambda\setminus Sing(X)}=\Delta^s\oplus\Delta^u$ of index $i$ with respect to $\psi_t$ if the following three conditions are satisfied:
\begin{enumerate}
\item  $\dim(\Delta^s(x))=i$ for all $x\in\Lambda\setminus Sing(X)$;
\item  $\Delta^s$ and $\Delta^u$ are $\psi_t$ invariant, that is $\psi_t(\Delta^s(x))=\Delta^s(\varphi_t(x))$ and $\psi_t(\Delta^u(x))=\Delta^u(\varphi_t(x))$ for all $x\in\Lambda\setminus Sing(X)$ and $t\in\mathbb{R}$;
\item there exist $C\geq 1$ and $\lambda>0$ such that $\|\psi_t|_{\Delta^s(x)}\|\cdot\|\psi_{-t}|_{\Delta^u(\varphi_t(x))}\|\leq Ce^{-\lambda t}$ for any $x\in\Lambda\setminus Sing(X)$ and $t\geq 0$.
\end{enumerate}
Since generally the normal spaces $N_x$ on regular points $x\in M\setminus Sing(X)$ can not be (in a unique way) extended to a singularity, the dominated splitting on $\Lambda\setminus Sing(X)$ can not be extended to the singularity, which causes a difficulty of non-compactness. The notion of the extended linear Poincar\'e flow introduced by Li-Gan-Wen\cite{LGW} then recovers the compactness by ``blowing up" the singularity as follows. Denote by
$$SM=\{e\in TM: \|e\|=1\}$$
the unit sphere bundle of $M$. Let $j:SM\to M$ be the natural projection defined by $j(e)=x$ for any $x\in M$ and $e\in T_xM\cap SM$. The tangent flow $\Phi_t$ induces a flow $\Phi_t^\#$ on $SM$ by $$\Phi_t^\#(e)=\Phi_t(e)/\|\Phi_t(e)\|.$$ At every $e\in SM$, define the normal space
$$N_e=\{v\in T_{j(x)}M: v\perp e\}.$$
Then we define the normal bundle $N_{SM}$ by attaching the linear space $N_e$ as the fiber of $e\in SM$. Note that for a given vector field $X$,  $N_{X(x)/\|X(x)\|}=N_x$. Then we extend the linear Poincar\'e flow $\psi_t$ of $X$ to a flow $$\tilde\psi_t=\tilde{\psi}^X_t:N_{SM}\to N_{SM}$$ by letting
$$\tilde\psi_t^X(v)=\Phi_t(v)-<v, \Phi_t^\#(e)>\Phi_t^\#(e), \text{ for every } v\in N_e.$$

Let $K\subset SM$ be a compact invariant set of $\Phi_t^\#$. We say that $K$ admits a {\it dominated splitting}  $N_{K}=\Delta^s\oplus\Delta^u$ of index $i$ with respect to $\tilde\psi_t$ if the following three conditions are satisfied:
\begin{enumerate}
\item  $\dim(\Delta^s(e))=i$ for all $e\in K$;
\item  $\Delta^s$ and $\Delta^u$ are $\tilde\psi_t$ invariant, that is, $\tilde\psi_t(\Delta^s(e))=\Delta^s(\Phi_t^\#(e))$ and $\tilde\psi_t(\Delta^u(e))=\Delta^u(\Phi^\#_t(e))$ for all $e\in K$ and $t\in\mathbb{R}$;
\item there exist $C\geq 1$ and $\lambda>0$ such that $\|\tilde\psi_t|_{\Delta^s(e)}\|\cdot\|\tilde\psi_{-t}|_{\Delta^u(\Phi^\#_t(e))}\|\leq Ce^{-\lambda t}$ for any $e\in K$ and $t\geq 0$.
\end{enumerate}

For a compact invariant set $\Lambda$ of $\varphi_t$, denote
$$\tilde\Lambda=\overline{\left\{\frac{X(x)}{\|X(x)\|}:x\in \Lambda\setminus Sing(X)\right\}},$$
where the closure is taken in the sphere bundle ${SM}$.
It is easy to see that if there is a dominated splitting $N_{\Lambda\setminus Sing(X)}=\Delta^s\oplus\Delta^u$ of index $i$ with respect to $\psi_t$, then there is a dominated splitting $N_{\tilde\Lambda}=\Delta^s\oplus\Delta^u$ of index $i$ with respect to $\tilde\psi_t$ on $\tilde\Lambda$ such that $\Delta^s(e)=\Delta^s(j(e))$ and $\Delta^u(e)=\Delta^u(j(e))$ for any $e\in j^{-1}(\Lambda\setminus Sing(X))\cap \tilde\Lambda$. Note that for simplicity we have used the same notation $\Delta^s\oplus\Delta^u$ for bundles of different bases.

Let $\sigma$ be a hyperbolic singularity of a vector field $X$. We can list all Lyapunov exponents of $\Phi_1|_{T_\sigma M}$ (counting the multiplicities) as
$$\lambda_1\leq\lambda_2\leq\cdots\lambda_s<0<\lambda_{s+1}\leq\cdots\leq\lambda_d.$$
The {\it saddle value} $sv(\sigma)$ of $\sigma$ with respect to $X$ is defined by
$$sv(\sigma)=\lambda_s+\lambda_{s+1}.$$
We say that $\sigma$ is {\it positively Lorenz-like} for $X$ if $\lambda_{s}$ has multiplicity $1$ (that is, $\lambda_{s-1}<\lambda_{s}$ )and $sv(\sigma)>0$.  We say that $\sigma$ is {\it Lorenz-like} for $X$ if $\sigma$ is either positively Lorenz-like for $X$ or positively Lorenz-like for $-X$.

Let $\sigma$ be a Lorenz-like singularity of $X$. If $sv(\sigma)>0$, we denote by $E^{c}(\sigma)$ the eigenspace of $\Phi_1|_{T_\sigma M}$ associated with the eigenvalue $e^{\lambda_{s}}$, and $E^{ss}(\sigma)$ the direct sum of all generalized eigenspaces of $\Phi_1|_{T_\sigma M}$ associated with the eigenvalues with norm less than $e^{\lambda_{s}}$. Denote $W^{ss}(\sigma)$ the strong stable manifold of $\sigma$ tangent to $E^{ss}(\sigma)$ at $\sigma$. Similarly, if $sv(\sigma)<0$, we denoted by $E^{c}(\sigma)$ the eigenspace of $\Phi_1|_{T_\sigma M}$ associated with the eigenvalue $e^{\lambda_{s+1}}$, and $E^{uu}(\sigma)$ the direct sum of all generalized eigenspaces of $\Phi_1|_{T_\sigma M}$ associated with the eigenvalues with norm bigger than $e^{\lambda_{s+1}}$. Also, denote $W^{uu}(\sigma)$ the strong unstable manifold of $\sigma$ tangent to $E^{uu}(\sigma)$ at $\sigma$.

Now we prove the following proposition.
\begin{Proposition}\label{singularhyperbolicity}
Let $X$ be a $C^1$ vector field and $\sigma$ be a positively Lorenz-like singularity. If there is a homoclinic point $x_0\in (W^s(\sigma)\cap W^u(\sigma))\setminus\{\sigma\}$ such that $x_0\notin W^{ss}(\sigma)$ and there is a dominated splitting $N_{Orb(x_0)}=\Delta^s\oplus\Delta^u$ of index $Ind(\sigma)-1$ with respect to $\psi_t$, then the compact invariant set $\Gamma=Orb(x_0)\cup\{\sigma\}$ is positively singular hyperbolic.
\end{Proposition}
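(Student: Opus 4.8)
The plan is to use the extended linear Poincar\'e flow to carry the given dominated splitting on the normal bundle over the regular orbit $Orb(x_0)$ to a singular hyperbolic splitting on the tangent bundle over the homoclinic loop $\Gamma$, letting the hyperbolic eigenvalues of $\sigma$ supply the contraction and the area expansion that are not part of the hypothesis. First record the linear data at $\sigma$: since $\sigma$ is positively Lorenz-like, $T_\sigma M=E^{ss}(\sigma)\oplus E^c(\sigma)\oplus E^u_\sigma$, where $\dim E^{ss}(\sigma)=Ind(\sigma)-1$, $E^c(\sigma)$ is the one-dimensional eigenspace of $\lambda_s$, $E^u_\sigma$ carries the exponents $\geq\lambda_{s+1}>0$, and $sv(\sigma)=\lambda_s+\lambda_{s+1}>0$; set $E^{ss}_\Gamma(\sigma):=E^{ss}(\sigma)$ and $E^{cu}_\Gamma(\sigma):=E^c(\sigma)\oplus E^u_\sigma$. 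Because $Orb(x_0)\subset W^s(\sigma)\cap W^u(\sigma)$ and $\Gamma$ is compact, $\varphi_t(x_0)\to\sigma$ as $t\to\pm\infty$, so outside a fixed compact time interval $Orb(x_0)$ lies in any prescribed neighborhood of $\sigma$. Since $x_0\notin W^{ss}(\sigma)$, the forward orbit is asymptotically tangent to $E^c(\sigma)$, so $e_t:=X(\varphi_t(x_0))/\|X(\varphi_t(x_0))\|=\Phi_t^\#(e_0)$ converges as $t\to+\infty$ to a unit vector $e^+\in E^c(\sigma)$ (a fixed point of $\Phi_t^\#$); as $t\to-\infty$, $e_t$ accumulates only on $E^u_\sigma\cap SM$, and since this $\alpha$-limit set is chain transitive while the projectivized action of $\Phi_t^\#$ on the fiber over $\sigma$ has only eigendirections among its chain-recurrent points, in fact $e_t\to e^-$ for some eigendirection $e^-\in E^u_\sigma$. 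Thus $\tilde\Gamma:=\overline{\{e_t:t\in\mathbb R\}}=\{e_t:t\in\mathbb R\}\cup\{e^+\}\cup\{e^-\}$ is a compact $\Phi_t^\#$-invariant subset of $SM$.

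The dominated splitting $N_{Orb(x_0)}=\Delta^s\oplus\Delta^u$ of index $Ind(\sigma)-1$ for $\psi_t$ extends by continuity to a dominated splitting $N_{\tilde\Gamma}=\Delta^s\oplus\Delta^u$ of the same index for $\tilde\psi_t$. For $e\in\{e^+,e^-\}$ the map $\tilde\psi_t|_{N_e}$ is conjugate to the flow induced by $\Phi_t|_{T_\sigma M}$ on $T_\sigma M/\langle e\rangle$, whose exponents are those of $\sigma$ with one copy of the exponent of the line $\langle e\rangle$ deleted; comparing with the index $Ind(\sigma)-1$ and using uniqueness of the dominated splitting of a given index, one finds that $\Delta^s(e)$ is the image of $E^{ss}(\sigma)$ in $N_e$ and $\langle e\rangle\oplus\Delta^u(e)=E^{cu}_\Gamma(\sigma)$, for both $e=e^+$ and $e=e^-$. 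Now define the candidate splitting of $T_\Gamma M$: at $\sigma$ put $E^{ss}:=E^{ss}_\Gamma(\sigma)$ and $E^{cu}:=E^{cu}_\Gamma(\sigma)$; for $x\in Orb(x_0)$, writing $e_x:=X(x)/\|X(x)\|$, put $E^{ss}(x):=T_xW^{ss}(x)$, where $W^{ss}(x)$ is the leaf through $x$ of the strong stable foliation of $W^s(\sigma)$ (well defined and $(Ind(\sigma)-1)$-dimensional since $\sigma$ is Lorenz-like and $x\notin W^{ss}(\sigma)$), and $E^{cu}(x):=\langle X(x)\rangle\oplus\Delta^u(e_x)$. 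Then $E^{ss}$ is $\Phi_t$-invariant because the strong stable foliation is, and $E^{cu}$ is $\Phi_t$-invariant because $\Phi_tX=X\circ\varphi_t$ and $\Phi_t(\Delta^u(e_x))\subset\Delta^u(e_{\varphi_t(x)})\oplus\langle X(\varphi_t(x))\rangle$ by $\psi_t$-invariance of $\Delta^u$. For the direct sum $E^{ss}(x)\oplus E^{cu}(x)=T_xM$ the dimension count is immediate, so it suffices to see that the fibrewise normal projection $\hat E^{ss}(x)$ of $E^{ss}(x)$ meets $\Delta^u(e_x)$ only at the origin: $\hat E^{ss}$ is $\psi_t$-invariant, and as $t\to+\infty$ continuity of the strong stable foliation at $\sigma$ gives $\hat E^{ss}(\varphi_t(x))\to\Delta^s(e^+)$ while $\Delta^u(e_{\varphi_t(x)})\to\Delta^u(e^+)$, two transverse subspaces of $N_{e^+}$, so a nonzero vector of $\hat E^{ss}(x)\cap\Delta^u(e_x)$ would be carried by $\psi_t$ into a vanishing intersection, a contradiction. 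Finally the splitting is continuous at $\sigma$: $E^{ss}(\varphi_t(x_0))\to E^{ss}_\Gamma(\sigma)$ as $t\to\pm\infty$ by foliation continuity, and $E^{cu}(\varphi_t(x_0))=\langle e_t\rangle\oplus\Delta^u(e_t)\to E^{cu}_\Gamma(\sigma)$ by the identifications at $e^+$ and $e^-$.

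It remains to verify the three defining conditions for $T_\Gamma M=E^{ss}\oplus E^{cu}$, and each reduces to the behaviour near $\sigma$ together with a bounded contribution from the compact middle arc of $Orb(x_0)$. Near $\sigma$, $E^{ss}$ is $C^0$-close to $E^{ss}(\sigma)$, whose exponents are $\leq\lambda_{s-1}<\lambda_s<0$, while the weakest direction of $E^{cu}$ has exponent $\lambda_s$; hence $\|\Phi_t|_{E^{ss}(x)}\|$ decays like $e^{\lambda_{s-1}t}$, giving condition (ii), and $\|\Phi_t|_{E^{ss}(x)}\|\cdot\|\Phi_{-t}|_{E^{cu}(\varphi_t(x))}\|$ decays like $e^{(\lambda_{s-1}-\lambda_s)t}$, giving the domination (i). For (iii), near $\sigma$ the two largest singular values of $\Phi_{-t}|_{E^{cu}}$ are asymptotic to $e^{-\lambda_st}$ and $e^{-\lambda_{s+1}t}$, so $\|\wedge^2\Phi_{-t}|_{E^{cu}(x)}\|$ decays like $e^{-sv(\sigma)t}$ — this is exactly where $sv(\sigma)>0$ enters. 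Absorbing the bounded middle arc turns each estimate into a uniform $(C,\lambda)$-estimate over all of $\Gamma$, so $\Gamma$ is positively singular hyperbolic.

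The step I expect to be the main obstacle is the compatibility check in the second paragraph: reconciling the tangent-bundle object $E^{ss}=TW^{ss}$, built from the stable foliation, with the normal-bundle object $\Delta^s$ provided by the hypothesis, and verifying that the resulting splitting over $Orb(x_0)$ extends continuously to the right linear subspaces at $\sigma$ from both time directions — this is precisely where $x_0\notin W^{ss}(\sigma)$ (which forces the forward tangent direction into $E^c(\sigma)$ rather than $E^{ss}(\sigma)$) and the domination of $\Delta^s\oplus\Delta^u$ are used in an essential way. The uniform estimates of the third paragraph are routine ``most of the orbit sits near the singularity'' arguments, but must be done with some care since the relevant cocycles are only eventually, not uniformly, hyperbolic.
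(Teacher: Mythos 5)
Your proof and the paper's share the central step: pass to the closure $\tilde\Gamma$ of the projectivized flow directions over $Orb(x_0)$, use $x_0\notin W^{ss}(\sigma)$ together with the invariance of $W^u(\sigma)$ to show $\tilde\Gamma\cap T_\sigma M\subset E^c(\sigma)\oplus E^u(\sigma)$, and then identify $\Delta^s(e)=E^{ss}(\sigma)$ and $\Delta^u(e)=N_e\cap E^{cu}(\sigma)$ for $e\in\tilde\Gamma\cap T_\sigma M$ by uniqueness of the dominated splitting of a fixed index. Where you part ways is in producing the contracting bundle $E^{ss}$ over $\Gamma$: the paper first establishes a ``mixed domination'' Claim (that $\|\tilde\psi_t|_{\Delta^s(e)}\|/\|\Phi_t(e)\|$ decays exponentially, and dually for $\Delta^u$) and then feeds the first inequality into Lemma~5.6 of \cite{LGW} to manufacture a $\Phi_t$-invariant $E^{ss}$ agreeing with $E^{ss}(\sigma)$ at the singularity, with the area expansion of $E^{cu}=\langle X\rangle\oplus\Delta^u$ handled via Proposition~3.4 of \cite{WWY}. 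You instead take $E^{ss}(x)=T_xW^{ss}(x)$ directly from the strong stable foliation of $W^s(\sigma)$ (available because $\lambda_{s-1}<\lambda_s$), verify by hand that its normal projection matches $\Delta^s$ at both ends, and then carry out the ``most of the orbit sits near $\sigma$'' compactness estimate yourself. This is a legitimate, more hands-on alternative; the trade is that the two cited results are replaced by direct estimates that must be done with the care you flag in your last paragraph. Two small points. The assertion that the $\alpha$-limit of $e_t$ is a single eigendirection $e^-$ is stronger than needed and not true in general (complex eigenvalues in $E^u(\sigma)$ can make the $\alpha$-limit a circle in the projectivization of $E^u(\sigma)$); what you actually use later -- containment of the $\alpha$-limit in $E^u(\sigma)\cap SM$ -- is all that is required, as in the paper. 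And it is worth stating explicitly that $X(x)\notin T_xW^{ss}(x)$ for $x\in Orb(x_0)$ (otherwise the orbit would lie in a single strong-stable leaf, forcing $x\in W^{ss}(\sigma)$), since this is what makes the normal projection of $E^{ss}(x)$ full-dimensional and $\psi_t$-equivariant, which is the crux of your transversality and continuity arguments.
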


\begin{proof}

By changing to an equivalent Riemannian metric, we can assume that the three bundles $E^{ss}(\sigma), E^c(\sigma), E^u(\sigma)$ are mutually orthogonal. Denote by $E^{cu}(\sigma)=E^c(\sigma)\oplus E^u(\sigma)$.

Let $\tilde\Gamma$ be the closure of the set
$$\left\{\frac{X(x)}{\|X(x)\|}:x\in Orb(x_0)\right\}$$
in $SM$. The dominated splitting $\Delta^s\oplus \Delta^u$ over $Orb(x_0)$ can be uniquely extended to a dominated splitting $\Delta^s\oplus \Delta ^u$ on $\tilde\Gamma$, that is, there is a $\tilde\psi_t$ invariant splitting $N_{\tilde\Gamma}=\Delta^s\oplus\Delta^u$ with constants $C\geq1, \lambda>0$ such that
$$\|\tilde\psi_t|_{\Delta^s(e)}\|\cdot\|\tilde\psi_{-t}|_{\Delta^u(\Phi_{-t}^\#(e))}\|<Ce^{-\lambda t}$$
for all $e\in \Gamma$ and $t\ge 0$. Moreover, we know that $\dim(\Delta^s(e))=Ind(\sigma)-1$ for every $e\in \tilde\Gamma$.

\begin{Claim}
There are $C\geq 1, \lambda>0$ such that
$$\frac{\|\tilde\psi_t|_{\Delta^s(e)}\|}{\|\Phi_t(e)\|}<Ce^{-\lambda t}, \ \ \ \  \|\tilde\psi_{-t}|_{\Delta^u(e)}\|\cdot\|\Phi_{-t}(e)\|<Ce^{-\lambda t},$$
for all $e\in\tilde\Gamma$ and $t>0$.
\end{Claim}

In fact, since $x_0\in W^s(\sigma)\setminus W^{ss}(\sigma)$, for any accumulation point $e$ of
$$\left\{\frac{X(\varphi_{t}(x_0))}{\|X(\varphi_{t}(x_0))\|}: t\geq 0\right\}$$
we have $e\in E^c(\sigma)$. By the invariance of the unstable manifold $W^u(\sigma)$, the accumulation points of
$$\left\{\frac{X(\varphi_{-t}(x_0))}{\|X(\varphi_{-t}(x_0))\|}: t\geq 0\right\}$$
are contained in $E^u(\sigma)$. Hence $\tilde\Gamma\cap T_\sigma M\subset E^c(\sigma)\oplus E^u(\sigma)$. By assumption $E^{ss}(\sigma), E^c(\sigma), E^u(\sigma) $ are mutually orthogonal, hence for any $e\in \tilde\Gamma\cap T_\sigma M$ one has $E^{ss}(\sigma)\subset N_e$. Thus there is a splitting $N_e=E^{ss}(\sigma)\oplus (N_e\cap (E^{cu}(\sigma)))$ for any $e\in\tilde\Gamma\cap T_\sigma M$. By the invariance of $E^{ss}(\sigma)$ and $E^{cu}(\sigma)$ under $\Phi_t$ and the invariance of $\tilde\Gamma\cap T_\sigma M$ under $\Phi_t^\#$, we know that
$\tilde\psi_t(E^{ss}(\sigma))=E^{ss}(\sigma)$ and $\tilde\psi_t(N_e\cap (E^{cu}(\sigma)))=N_{\Phi_t^\#(e)}\cap E^{cu}(\sigma)$. Since $E^{ss}(\sigma)\oplus E^{cu}(\sigma)$ is a dominated splitting w.r.t $\Phi_t$, for some big $T>0$ we will have
$$\|\Phi_t|_{E^{ss}(\sigma)}\|\cdot\|\Phi_{-t}|_{E^{cu}(\sigma)}\|<\frac{1}{2}$$
for all $t>T$. Since $\|\tilde\psi_t|_{E^{ss}(\sigma)\cap N_e}\|=\|\Phi_t|_{E^{ss}(\sigma)}\|$ and $\|\tilde\psi_t|_{E^{cu}(\sigma)\cap N_e}\|\leq\|\Phi_t|_{E^{cu}(\sigma)}\|$ for any $e\in \tilde\Gamma\cap T_\sigma M$ and any $t\in \mathbb R$,  it follows that
$$\|\tilde\psi_t|_{E^{ss}(\sigma)}\|\cdot\|\tilde\psi_{-t}|_{E^{cu}(\sigma)\cap N_{\Phi_t^\#(e)}}\|<\frac{1}{2}$$
for all $e\in \tilde\Gamma\cap T_\sigma M$ and $t>T$. By the fact that $\dim(E^{ss}(\sigma))=\dim(\Delta^s(e))=Ind(\sigma)-1$ and the uniqueness of dominated splitting we know that $E^{ss}(\sigma)=\Delta^s(e)$ and $E^{cu}(\sigma)\cap N_e=\Delta^u(e)$ for every $e\in \tilde\Gamma\cap T_\sigma M$. Since $\tilde\psi_t|_{\Delta^{s}(e)}=\tilde\psi_t|_{E^{ss}}=\Phi_t|_{E^{ss}(\sigma)}$ and since $E^{ss}(\sigma)\oplus E^{cu}(\sigma)$ is a dominated splitting w.r.t $\Phi_t$, one has
$$\frac{\|\psi_t|_{\Delta^s(e)}\|}{\|\Phi_t(e)\|}<\frac{1}{2}$$
for all $e\in T_\sigma M$ and $t>T$. By the continuity of $\Delta^s(e)$ we know that if $|s|$ is large enough then
$$\frac{\|\psi_t|_{\Delta^s(\varphi_s(x_0))}\|}{\|\Phi_t(\frac{X(\varphi_s(x_0))}{\|X(\varphi_s(x_0))})\|}<\frac{1}{2}$$
for any $t>T$. Hence for any $s\in\mathbb{R}$, we can find $T'$ sufficiently large such that the orbit segment $\varphi_{[0,T]}(\varphi_s(x_0))$ stays near $\sigma$ sufficiently long so that
$$\frac{\|\psi_{T'}|_{\Delta^s(\varphi_s(x_0))}\|}{\|\Phi_{T'}(\frac{X(\varphi_s(x_0))}{\|X(\varphi_s(x_0))})\|}<1.$$
This proves that for any $e\in\tilde\Gamma$, there is $T(e)$ such that
$$\frac{\|\psi_{T(e)}|_{\Delta^s(e)}\|}{\|\Phi_{T(e)}(e)\|}<1.$$
By the compactness of $\Gamma$ there are $C\geq 1, \lambda>0$ such that
$$\frac{\|\tilde\psi_t|_{\Delta^s(e)}\|}{\|\Phi_t(e)\|}<Ce^{-\lambda t}$$
for all $e\in\tilde\Gamma$ and $t>0$.

On the other hand, from the fact that $\Delta^u(e)=N_e\cap E^{cu}(\sigma)$ we know that
$$\|\tilde\psi_{-t}|_{\Delta^u(e)}\|\cdot\|\Phi_{-t}(e)\|\leq \|\wedge^2\Phi_{-t}|_{E^{cu}(\sigma)}\|$$
for every $e\in \Gamma\cap T_\sigma M$ and $t\in\mathbb{R}$. Since $sv(\sigma)>0$, every Lyapunov exponent of $\|\wedge^2\Phi_{-t}|_{E^{cu}(\sigma)}\|$ is strictly less than zero. Hence there is $T>0$ such that
$$\|\wedge^2\Phi_{-t}|_{E^{cu}(\sigma)}\|<\frac{1}{2}$$
for every $t>T$. Thus
$$\|\tilde\psi_{-t}|_{\Delta^u(e)}\|\cdot\|\Phi_{-t}(e)\|<\frac{1}{2}$$
for any $e\in\tilde\Gamma\cap T_\sigma M$ and $t>T$. Similarly by the continuity of $\Delta^u(e)$ we know that for
$|s|$ large one has
$$\|\psi_t|_{\Delta^s(\varphi_s(x_0))}\|\cdot\|\Phi_t(\frac{X(\varphi_s(x_0))}{\|X(\varphi_s(x_0))})\|<\frac{1}{2}$$
for any $t>T$. Hence for any $s\in\mathbb{R}$, we can find $t>0$ large enough such that $\varphi_{[0,t]}(\varphi_s(x_0))$ stay near $\sigma$ long enough so that
$$\|\psi_t|_{\Delta^s(\varphi_s(x_0))}\|\cdot\|\Phi_t(\frac{X(\varphi_s(x_0))}{\|X(\varphi_s(x_0))})\|<1.$$
By the compactness of $\Gamma$ there are $C\geq 1, \lambda>0$ such that
$$\|\tilde\psi_t|_{\Delta^s(e)}\|\cdot\|\Phi_t(e)\|<Ce^{-\lambda t}$$
for all $e\in\tilde\Gamma$ and $t>0$. This ends the proof of the Claim.
\vskip 0.2cm

The inequality $$\frac{\|\tilde\psi_t|_{\Delta^s(e)}\|}{\|\Phi_t(e)\|}<Ce^{-\lambda t}$$
tells that $\Delta^s$ dominates the flow direction. It is a ``mixed'' domination as $\Delta^s$ is invariant under $\psi_t$ while the flow direction is invariant under $\Phi_t$. Now we can finish the proof of the lemma in the same way as Proposition 4.5 of \cite{WWY} does. We just give a sketch.

Let $E^{cu}(\varphi_t(x_0))=<X(\varphi_t(x_0))>\oplus \Delta^u(\varphi_t(x_0)$. This gives a  subbundle $E^{cu}$ over $\Gamma$. Since
$$\|\tilde\psi_{-t}|_{\Delta^u(e)}\|\cdot\|\Phi_{-t}(e)\|<Ce^{-\lambda t}$$
for all $e\in\tilde\Gamma$ and all $t\in\mathbb{R}$, one sees that
$$\|\psi_{-t}|_{\Delta^u(\varphi_t(x_0))}\|\cdot\|\Phi_{-t}|_{<X(\varphi_t(x_0))>}\|<Ce^{-\lambda t}$$
for all $t\in\mathbb{R}$.  Denote by $\eta_1$ and $\eta_2$ the largest two Lyapunov exponents of $\Phi_{-1}|_{E^{cu}}$, then  $\eta_1$ has multiplicity 1 and $\eta_1+\eta_2<0$, and then $E^{cu}$ is area expanding by Proposition 3.4 of \cite{WWY}.

On the other hand, since
$$\frac{\|\tilde\psi_t|_{\Delta^s(e)}\|}{\|\Phi_t(e)\|}<Ce^{-\lambda t}$$
for all $e\in\tilde\Gamma$, by applying Lemma 5.6 of \cite{LGW} we can find a $\Phi_t$ invariant bundle $E^{ss}$ that matches $E^{ss}(\sigma)$ at the singularity $\sigma$. Since $\Delta^s\oplus<X>$ and $\Delta^s\oplus\Delta^u$ are dominated splittings one can check that $E^{ss}\oplus E^{cu}$ is a dominated splitting and  $E^{ss}$ is contracting with respect to $\Phi_t$. This proves that $\Gamma$ is positively singular hyperbolic, and completes the proof of Proposition \ref{singularhyperbolicity}.
\end{proof}

The following property of star flows was proven in \cite{SGW}.
\begin{Proposition}[\cite{SGW} Lemma 4.2, Corollary 4.3 ]\label{saddlevalue}
Let $X$ be a star vector field and $\sigma$ be a singularity of $X$. If $C(\sigma)$ is nontrivial, then $\sigma$ is Lorenz-like for $X$.
\end{Proposition}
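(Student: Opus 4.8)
The plan is to exploit the star condition as a source of \emph{forbidden bifurcations}: if $\sigma$ fails to be Lorenz-like, I would manufacture, by an arbitrarily $C^1$-small perturbation, a non-hyperbolic periodic orbit, contradicting the very definition of a star vector field. Thus the whole argument is organized around a single contradiction.

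First I would put $\sigma$ in contact with its own invariant manifolds. Since $C(\sigma)$ is a chain class it is chain transitive, and it is nontrivial, so Lemma \ref{lem1} supplies points $z\in(W^u(\sigma)\setminus\{\sigma\})\cap C(\sigma)$ and $w\in(W^s(\sigma)\setminus\{\sigma\})\cap C(\sigma)$. Because $z$ and $w$ lie in the same chain class, chain transitivity yields, for every $\delta>0$, a $\delta$-pseudo orbit inside $C(\sigma)$ from a point of $Orb(z)$ to a point of $Orb(w)$. Applying the connecting lemma for flows, I obtain $Y$ in the star neighbourhood $\mathcal U$ of $X$, arbitrarily $C^1$-close to $X$ and coinciding with $X$ on a neighbourhood of $\sigma$ (so that $\sigma$ keeps exactly its $X$-local eigenstructure), such that $W^u(\sigma,Y)\cap W^s(\sigma,Y)\neq\{\sigma\}$; that is, $\sigma$ acquires a homoclinic loop $\Gamma$ for $Y$.

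Next I would run the standard analysis of the first-return map in a neighbourhood of $\Gamma$. Choosing cross-sections near $\sigma$ transverse to the leading stable and leading unstable directions, the return map factors as a transition map along the regular part of $\Gamma$ (a diffeomorphism between cross-sections, which a $C^1$-small perturbation of $Y$ supported away from $\sigma$ can alter essentially freely while keeping us in $\mathcal U$) composed with the map induced by the linearized flow near $\sigma$. The contraction/expansion rates of the latter are governed by the saddle value $sv(\sigma)$ and by whether the leading stable/unstable eigenvalues are real and simple. The key point is then the classical case analysis from homoclinic-loop bifurcation theory: if the leading eigenvalue on the weaker side (the stable side when $sv(\sigma)\ge 0$, the unstable side when $sv(\sigma)\le 0$) is non-real, a Shil'nikov-type horseshoe appears near $\Gamma$ and, after adjusting the transition map, contains a non-hyperbolic periodic orbit; if that eigenvalue is real but not simple, the leading block of the return map is degenerate and can likewise be tuned to non-hyperbolicity; and if $sv(\sigma)=0$, the periodic orbit bifurcating from $\Gamma$ is itself non-hyperbolic. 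In every case one produces, arbitrarily $C^1$-close to $Y$ and hence inside $\mathcal U$, a vector field with a non-hyperbolic periodic orbit, contradicting the star property. Therefore $sv(\sigma)\neq 0$; replacing $X$ by $-X$ if necessary so that $sv(\sigma)>0$, the leading stable eigenvalue $\lambda_s$ is real and simple, i.e. $\sigma$ is positively Lorenz-like for $X$ (or for $-X$), so $\sigma$ is Lorenz-like.

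The main obstacle is the third step: making the homoclinic-loop bifurcation analysis genuinely $C^1$-robust across all spectral configurations (arbitrary index of $\sigma$, real versus complex leading eigenvalues on either side, and the borderline $sv(\sigma)=0$), and verifying that the needed modification of the transition map is always realizable by a legitimately $C^1$-small perturbation that stays inside the star neighbourhood $\mathcal U$. An alternative route, avoiding explicit bifurcations, is to use Liao's selecting and closing lemmas to obtain periodic orbits of $X$ itself accumulating on $C(\sigma)$ with uniformly hyperbolic linear Poincar\'e flow, to pass to a dominated splitting of the extended linear Poincar\'e flow $\tilde\psi_t$ over $\widetilde{C(\sigma)}$, and to read off the leading spectrum of $\sigma$ from the restriction of that splitting to $\widetilde{C(\sigma)}\cap T_\sigma M$; the cost there is invoking the full Liao-theoretic machinery.
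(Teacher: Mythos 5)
The paper does not prove this statement; it cites \cite{SGW} (Lemma 4.2, Corollary 4.3). Your first two steps (Lemma~\ref{lem1} plus a connecting argument to manufacture a homoclinic loop to $\sigma$ for a nearby $Y\in\mathcal U$, with the perturbation supported away from $\sigma$ so as to preserve its spectrum) are in the spirit of the star-flow literature and are plausibly fixable, although the $C^1$ connecting lemma for pseudo-orbits carries non-trivial side hypotheses and you cannot first pass to a generic $Y$ without risking the loss of precisely the spectral degeneracy you want to exploit.

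The genuine gap is in step 3, and it is not a matter of polish. To contradict the star property you must exhibit an \emph{actual non-hyperbolic periodic orbit} of some $Y$ arbitrarily $C^1$-close to $X$. Your case analysis does not deliver this. (i) In the saddle-focus (Shil'nikov) case, the horseshoes near the loop are made of \emph{uniformly hyperbolic} periodic orbits; a horseshoe does not ``contain a non-hyperbolic periodic orbit.'' Turning a Shil'nikov loop into a non-hyperbolic periodic orbit requires unfolding a further degeneracy (a tangency, a resonance), and it is far from clear that this can be realized by a $C^1$-small perturbation supported away from $\sigma$ while staying in $\mathcal U$; the classical normal-form analysis works in higher regularity and low codimension and does not transfer to arbitrary index and $C^1$-smallness for free. (ii) In the ``real, non-simple leading eigenvalue'' case, the degeneracy of the contracting block of the linearization does \emph{not} by itself make the Poincar\'e return map have a neutral eigenvalue; you would need a concrete mechanism, and none is given. (iii) Even the $sv(\sigma)=0$ case, the most plausible one, needs a normal-form computation to show that the periodic orbit born from the unfolding is genuinely non-hyperbolic rather than weakly hyperbolic, again with $C^1$ control. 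So ``can likewise be tuned to non-hyperbolicity'' is an assertion, not an argument.

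Your ``alternative route'' is the correct one and is, as far as I can tell, essentially what \cite{SGW} does: use the star hypothesis through Liao's quantitative estimates, which give a uniform $(\eta,T)$-domination and uniform contraction/expansion for the linear Poincar\'e flow along periodic orbits of all $Y$ in a $C^1$-neighborhood of $X$; produce, by closing/connecting, periodic orbits $\gamma_n$ of $Y_n\to X$ accumulating on $C(\sigma)$ and in particular on $\sigma$; lift to the sphere bundle and pass to the limit to get a dominated splitting of the extended linear Poincar\'e flow $\tilde\psi_t$ over $\widetilde{C(\sigma)}$, including the fiber over $\sigma$; and finally match that splitting with the eigenstructure of $\Phi_t|_{T_\sigma M}$, which forces the weak eigenvalue on the appropriate side to be real and simple and forces $sv(\sigma)\neq 0$. (Note: the periodic orbits need not be orbits of $X$ itself; they are orbits of nearby $Y_n$, and the uniformity of Liao's estimates over a $C^1$-neighborhood is exactly what lets you take the limit.) That should be the main argument, not a remark; as written, the principal line of your proof has a hole that the bifurcation machinery you invoke does not fill.
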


Let $\sigma$ be a Lorenz-like singularity. If $sv(\sigma)>0$, we set $Ind_p(\sigma)=Ind(\sigma)-1$, and if $sv(\sigma)<0$, we set $Ind_p(\sigma)=Ind(\sigma)$. This notion of $Ind_p(\sigma)$ is referred to as the {\it periodic index} of $\sigma$ in \cite{SGW}, which tells the possible index of periodic orbits arising from perturbations of a homoclinic loop of $\sigma$, as stated in the next Proposition which is  Lemma 4.4 of \cite{SGW}.

\begin{Proposition}\label{Periodindex}
Let $X$ be a star vector field and $\sigma$ be a singularity. If there is a homoclinic point $x_0\in (W^s(\sigma)\cap W^u(\sigma))\setminus\{\sigma\}$, together with a sequence $X_n$ and a sequence of periodic orbits $\gamma_n$ of $X_n$ such that $X_n\to X$ and $\gamma_n\to (Orb(x_0)\cup\{\sigma\})$ as $n\to\infty$, then  $Ind(\gamma_n)=Ind_p(\sigma)$ for large $n$.
\end{Proposition}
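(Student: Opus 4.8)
The plan is to compute $Ind(\gamma_n)$ --- the dimension of the stable subbundle of the linear Poincar\'e flow over $\gamma_n$ --- directly from the linearization of the flow near $\sigma$; the star hypothesis enters only to guarantee, via $X_n\to X$, that $\gamma_n$ is a hyperbolic periodic orbit for all large $n$, so that $Ind(\gamma_n)$ is meaningful. Since the homoclinic orbit makes $\Gamma:=Orb(x_0)\cup\{\sigma\}$ a nontrivial chain transitive subset of $C(\sigma)$, Proposition~\ref{saddlevalue} shows that $\sigma$ is Lorenz-like; replacing $X$ by $-X$ if necessary --- which exchanges $Ind(\sigma)$ with $d-Ind(\sigma)$, $sv(\sigma)$ with $-sv(\sigma)$, $Ind_p(\sigma)$ with $d-1-Ind_p(\sigma)$, and $Ind(\gamma_n)$ with $d-1-Ind(\gamma_n)$ --- I may assume $sv(\sigma)>0$, so that $Ind_p(\sigma)=s-1$ with $s:=Ind(\sigma)$, that $\dim E^{ss}(\sigma)=s-1$, $\dim E^{c}(\sigma)=1$, $\dim E^{u}(\sigma)=d-s$, and (after an equivalent metric) that $E^{ss}(\sigma),E^{c}(\sigma),E^{u}(\sigma)$ are mutually orthogonal. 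I first treat the generic case $x_0\notin W^{ss}(\sigma)$, so that $X(\varphi_t(x_0))/\|X(\varphi_t(x_0))\|$ tends to the line $E^{c}(\sigma)$ as $t\to+\infty$ and to $E^{u}(\sigma)$ as $t\to-\infty$.

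Fix a small ball $B=B(\sigma,\rho)$. For $n$ large, $\gamma_n$ decomposes into an \emph{excursion} that is $C^0$-close to the compact arc $\{\varphi_t(x_0):|t|\le T_0\}$ (a fixed arc, bounded away from $\sigma$) and a \emph{passage} in $B$ of total duration tending to infinity; the passage splits into an incoming phase of duration $\tau_1^{(n)}$ (shadowing $W^{s}(\sigma)$, with the flow direction almost along $E^{c}(\sigma)$), a bounded transition near the closest approach to $\sigma$, and an outgoing phase of duration $\tau_2^{(n)}$ (shadowing $W^{u}(\sigma)$, with the flow direction almost along the exit unit vector $\bar e\in E^{u}(\sigma)$). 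Writing the linear Poincar\'e return map as $R_n=E_n\circ P_n$, the excursion factor $E_n$ is a linear isomorphism with $\|E_n\|$ and $\|E_n^{-1}\|$ bounded uniformly in $n$, while linearizing at $\sigma$ (and keeping track of the normalization by the flow direction built into the linear Poincar\'e flow) shows that $P_n$ is, up to uniformly bounded factors, block diagonal with three blocks: (a) on the $E^{ss}(\sigma)$ directions (dimension $s-1$), contraction by $\approx e^{\lambda_{s-1}(\tau_1^{(n)}+\tau_2^{(n)})}$, hence arbitrarily strong; (b) on a single central direction --- the direction $\bar e$ during the incoming phase, carried by the transition to $E^{c}(\sigma)$ --- multiplication by $\approx e^{\lambda_{s+1}\tau_1^{(n)}+\lambda_{s}\tau_2^{(n)}}$; and (c) on the $E^{u}(\sigma)\cap\bar e^{\perp}$ directions (dimension $d-s-1$), expansion by $\approx e^{\lambda_{s+1}(\tau_1^{(n)}+\tau_2^{(n)})}$, hence arbitrarily strong.

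Everything then turns on the sign of $\lambda_{s+1}\tau_1^{(n)}+\lambda_{s}\tau_2^{(n)}$. The standard linear estimate for a passage near a hyperbolic saddle, applied to $\gamma_n$ entering $B$ at distance $\ep_n\to 0$ from $W^{s}(\sigma)$ and at a bounded distance from $\sigma$ along $W^{s}(\sigma)$, gives $\tau_1^{(n)}=\frac{1}{\lambda_{s+1}-\lambda_{s}}\log\frac{1}{\ep_n}+O(1)$ and $\tau_2^{(n)}=\frac{-\lambda_{s}}{\lambda_{s+1}(\lambda_{s+1}-\lambda_{s})}\log\frac{1}{\ep_n}+O(1)$, whence
$$\lambda_{s+1}\tau_1^{(n)}+\lambda_{s}\tau_2^{(n)}=\frac{\lambda_{s+1}^{2}-\lambda_{s}^{2}}{\lambda_{s+1}(\lambda_{s+1}-\lambda_{s})}\,\log\frac{1}{\ep_n}+O(1)\ \longrightarrow\ +\infty,$$
the limit being $+\infty$ precisely because $sv(\sigma)=\lambda_{s}+\lambda_{s+1}>0$ forces $\lambda_{s+1}^{2}>\lambda_{s}^{2}$. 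Thus for all large $n$ block (b) is strongly expanding, so $P_n$ has exactly $s-1$ singular values far below $1$ and $d-s$ far above $1$, separated by an arbitrarily wide gap; since $E_n$ and $E_n^{-1}$ are uniformly bounded the same spectral splitting persists for $R_n$, and as $\gamma_n$ is hyperbolic this forces it to have exactly $s-1$ contracting directions. Hence $Ind(\gamma_n)=s-1=Ind_p(\sigma)$ for all large $n$, and undoing $X\mapsto -X$ covers the case $sv(\sigma)<0$. In the excluded case $x_0\in W^{ss}(\sigma)$, note that then $\Gamma$ meets $W^{ss}(\sigma)\setminus\{\sigma\}$, which (by the argument of Lemma~\ref{lem5}) is incompatible with $\Gamma$ being singular hyperbolic --- the situation in which the proposition is used --- so this case does not arise; alternatively, the same linearization goes through with the eigendirections relabelled.

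The main obstacle is the step that fixes the \emph{sign} of the central multiplier, i.e. the limit $\lambda_{s+1}\tau_1^{(n)}+\lambda_{s}\tau_2^{(n)}\to+\infty$: one must match quantitatively the expansion that the direction $\bar e$ undergoes while the orbit shadows $W^{s}(\sigma)$ (duration $\tau_1^{(n)}$, rate $\lambda_{s+1}$) against the contraction the same direction undergoes, past the transition, while it shadows $W^{u}(\sigma)$ (duration $\tau_2^{(n)}$, rate $\lambda_{s}$), and then verify that the near-saddle geometry pins the ratio $\tau_1^{(n)}:\tau_2^{(n)}$ to the side on which $sv(\sigma)>0$ makes the net effect expanding; controlling the bounded-time transition so that the wide singular-value gaps of $P_n$ survive in $R_n$ is the technical core. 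These estimates are carried out in \cite{SGW}.
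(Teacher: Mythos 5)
The paper does not prove Proposition~\ref{Periodindex}; it explicitly imports it verbatim as Lemma~4.4 of \cite{SGW}, so there is no in-paper proof to compare against. Judged on its own merits, your Shilnikov-type reconstruction has the right geometry and the central computation checks out: with $\tau_1^{(n)}=\frac{1}{\lambda_{s+1}-\lambda_s}\log\frac{1}{\ep_n}$, $\tau_2^{(n)}=\frac{-\lambda_s}{\lambda_{s+1}(\lambda_{s+1}-\lambda_s)}\log\frac{1}{\ep_n}$ one gets $\lambda_{s+1}\tau_1^{(n)}+\lambda_s\tau_2^{(n)}=\frac{\lambda_s+\lambda_{s+1}}{\lambda_{s+1}}\log\frac{1}{\ep_n}$, i.e.\ the central multiplier is $\approx e^{sv(\sigma)\cdot T_n}$ with $T_n$ the passage time, which is exactly the mechanism by which $sv(\sigma)>0$ forces expansion; and the normalization by the flow direction built into $\psi_t$ only costs an $O(1)$ factor because $\|X\|$ is comparable at entry and exit of the ball $B$.

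There is, however, a genuine gap in the last step. From ``$P_n$ has $s-1$ singular values $\to 0$ and $d-s$ singular values $\to\infty$, $E_n,E_n^{-1}$ uniformly bounded, and $\gamma_n$ hyperbolic'' you conclude $Ind(\gamma_n)=s-1$. This inference is false as stated: a uniformly bounded conjugating factor can mix the small and large singular-value subspaces, and a singular-value gap does not control the count of eigenvalues inside the unit disk (e.g.\ $\begin{pmatrix}0 & \ep^{-1}\\ -\ep^{3} & 0\end{pmatrix}$ has singular values $\ep^{-1}$ and $\ep^{3}$ but both eigenvalues of modulus $\ep<1$). What makes the conclusion true here is the additional geometric information that the small singular-value subspace of $P_n$ is pinned near $E^{ss}(\sigma)$, an (almost) fixed subspace, and that the passage factor $P_n$ is nearly block-diagonal with respect to this splitting; one must run a cone/invariant-splitting argument to see that $R_n=E_nP_n$ then has an invariant $(s-1)$-dimensional contracting bundle close to $E^{ss}$. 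Equivalently --- and I believe this is closer to how \cite{SGW} actually argues --- one uses Liao's uniform estimates for star flows (uniform domination and Pliss-type contraction along the dominated subbundles of periodic orbits), passes the dominated splitting on $\gamma_n$ to a limit splitting over $\tilde\Gamma$, and shows that any index other than $s-1$ contradicts the Pliss bound across the passage, because the central direction contributes $e^{(\lambda_{s+1}-\lambda_s)(\tau_1^{(n)}-\tau_2^{(n)})}\to\infty$ when the index is too large, and symmetrically when it is too small. Your write-up explicitly defers ``the technical core'' to \cite{SGW}, which is honest, but it means the decisive step --- the one that actually pins the index rather than just the magnitudes --- is cited rather than proved.
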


The following proposition is Theorem 3.6 of \cite{SGW}.

\begin{Proposition}\label{Lorenz-like}
Let $X$ be a star vector field and $\sigma$ be a singularity with $C(\sigma)$ nontrivial. If $sv(\sigma)>0$, then $(W^{ss}(\sigma)\setminus\{\sigma\})\cap C(\sigma)=\emptyset$. If $sv(\sigma)<0$, then $(W^{uu}(\sigma)\setminus\{\sigma\})\cap C(\sigma)=\emptyset$.
\end{Proposition}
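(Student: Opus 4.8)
The plan is to argue by contradiction, and first to reduce to the case $sv(\sigma)>0$: replacing $X$ by $-X$ interchanges $W^{ss}(\sigma)$ with $W^{uu}(\sigma)$, leaves the chain class $C(\sigma)$ unchanged, and reverses the sign of the saddle value, so the statement for $sv(\sigma)<0$ is the statement for $sv(\sigma)>0$ applied to $-X$. So assume $sv(\sigma)>0$ and suppose, for a contradiction, that there is $z\in(W^{ss}(\sigma)\setminus\{\sigma\})\cap C(\sigma)$. In particular $z\in W^s(\sigma)$, and running the argument of Lemma \ref{lem1} on the chain transitive set $C(\sigma)$ produces a point of $(W^u(\sigma)\setminus\{\sigma\})\cap C(\sigma)$ as well. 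Using the $C^1$ connecting lemma for flows (Hayashi; Wen--Xia; Crovisier) together with the chain transitivity of $C(\sigma)$, I would then produce vector fields $X_n\to X$ in $\mathcal{X}^1(M)$ having a homoclinic point $y_n$ of $\sigma$ with $y_n\to z$, and, applying the periodic orbit version of the connecting lemma (or Proposition \ref{Periodindex} to the star field $X_n$), a periodic orbit $\gamma_n$ of $X_n$ converging in the Hausdorff metric to $Orb(y_n)\cup\{\sigma\}$. Since $W^{ss}_{X_n}(\sigma)\to W^{ss}_X(\sigma)$ and $y_n\to z\in W^{ss}_X(\sigma)$, the orbit $\gamma_n$ enters every fixed neighbourhood of $\sigma$ along a curve that is $o(1)$-close to $W^{ss}_{X_n}(\sigma)$; that is, the incoming branch of the approximated loop lies in the strong stable manifold.

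Next I would bring in the star hypothesis. Since being a star vector field is $C^1$-open, $X_n$ is a star vector field for all large $n$, so each $\gamma_n$ is hyperbolic; by Proposition \ref{Periodindex}, $Ind(\gamma_n)=Ind_p(\sigma)=Ind(\sigma)-1$ for large $n$ (indices of hyperbolic singularities are locally constant, so $Ind_p^{X_n}(\sigma)=Ind_p^{X}(\sigma)$); and, by Liao's and Gan--Wen's uniform estimates for periodic orbits of star flows, the hyperbolic splitting $N_{\gamma_n}=\Delta^s_n\oplus\Delta^u_n$ for the linear Poincar\'e flow $\psi^{X_n}_t$ along $\gamma_n$ enjoys contraction, expansion and domination constants independent of $n$. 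Write $k=\dim\Delta^s_n=Ind(\sigma)-1=\dim E^{ss}(\sigma)$.

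The core of the argument is a quantitative analysis of $\psi^{X_n}_t$ along the long sojourn of $\gamma_n$ near $\sigma$, best organised through the extended linear Poincar\'e flow $\tilde\psi_t$ so as to control what happens on the blown-up fibre of $SM$ over $\sigma$. Along this sojourn the flow direction of $\gamma_n$ rotates from a direction in $E^{ss}(\sigma)$ (it shadows the incoming branch, which lies in $W^{ss}(\sigma)$) to a direction in $E^u(\sigma)$ (along the outgoing branch); consequently the central eigendirection $E^c(\sigma)$, the $e^{\lambda_s}$-eigenspace, lies in the normal bundle for essentially the whole sojourn, while a strong stable direction is ``spent'' as the flow direction. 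One then computes the $\psi^{X_n}_t$-multiplier along $\gamma_n$ in this central direction (and, symmetrically, in the strong stable direction that is freed up): the time the orbit spends contracting versus expanding it is governed by the ratios of the eigenvalues $\lambda_1,\dots,\lambda_d$ of $\Phi_1|_{T_\sigma M}$, and, precisely because the loop enters through $W^{ss}$ rather than through $W^s\setminus W^{ss}$, this bookkeeping differs from the one underlying Proposition \ref{Periodindex}. Using the defining inequalities of Lorenz-likeness together with $sv(\sigma)=\lambda_s+\lambda_{s+1}>0$, one shows the multiplier must either tend to $1$ or put a contracted direction into $\Delta^u_n$ (equivalently an expanded one into $\Delta^s_n$), in either case contradicting the $n$-uniform hyperbolicity and fixed index of $\gamma_n$. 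The dual form of the same computation: passing to the limit yields a dominated splitting of $\tilde\psi_t$ over the blow-up $\tilde\Gamma$ of index $Ind_p(\sigma)=k$ whose restriction to the fibre over $\sigma$, by uniqueness of dominated splittings on $T_\sigma M$, is incompatible with the accumulation direction of $X(\varphi_t(y_n))/\|X(\varphi_t(y_n))\|$ lying in $E^{ss}(\sigma)$. This contradiction forces $(W^{ss}(\sigma)\setminus\{\sigma\})\cap C(\sigma)=\emptyset$, and the case $sv(\sigma)<0$ follows by the reduction above.

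The step I expect to be genuinely hard is this last one: making the (extended) linear Poincar\'e flow estimate near $\sigma$ quantitative and stable under the perturbations $X_n$ — that is, showing cleanly that a homoclinic orbit entering $\sigma$ through $W^{ss}(\sigma)$ is quantitatively inconsistent with the uniform hyperbolicity constants that the periodic orbits of a star flow must satisfy. Everything else (the connecting lemma, $C^1$-openness of the star property, the index count via Proposition \ref{Periodindex}, and the passage to a limiting dominated splitting) is by now standard machinery.
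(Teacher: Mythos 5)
The paper does not prove this statement: it is imported verbatim as Theorem~3.6 of \cite{SGW}, so there is no in-paper proof to compare against. What I can compare your sketch against is the closely related Lemma~\ref{lem43} of the present paper (from Crovisier--da~Luz--Yang--Zhang), whose mechanism is the one you would ultimately need, and against it your sketch has a real gap. Your skeleton --- reduce to $sv(\sigma)>0$, apply Lemma~\ref{lem1} to get a point of $W^u(\sigma)\cap C(\sigma)$, use the connecting lemma and the $C^1$-openness of the star property to manufacture nearby star fields $X_n$ with hyperbolic periodic orbits $\gamma_n$ accumulating on a homoclinic loop through $z$, invoke Liao's uniform domination and Proposition~\ref{Periodindex} for the index, and pass to a limiting dominated splitting of $\tilde\psi_t$ --- is sound and is indeed the standard machinery. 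The problem is the last step, exactly the one you flag as ``hard''.

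Your concluding claim, that the limiting dominated splitting of $\tilde\psi_t$ over $T_\sigma M$ ``is incompatible with the accumulation direction lying in $E^{ss}(\sigma)$'', is not correct at the level of a single fibre. For $e''\in E^{ss}(\sigma)$ a unit vector, the normal space $N_{e''}=(E^{ss}(\sigma)\cap (e'')^\perp)\oplus E^c(\sigma)\oplus E^u(\sigma)$ does carry a $\tilde\psi_t$-dominated splitting of the required index $Ind(\sigma)-1$, namely $\Delta^s(e'')=(E^{ss}(\sigma)\cap (e'')^\perp)\oplus E^c(\sigma)$ and $\Delta^u(e'')=E^u(\sigma)$; the domination ratio there is controlled by $\lambda_s-\lambda_{s+1}<0$ and poses no obstruction. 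So uniqueness of dominated splittings over $T_\sigma M$ is not the contradiction. What actually kills the configuration is a genuinely global argument: one needs a direction $e$ that is a nontrivial combination of an $E^{ss}$- and an $E^u$-vector, a normal vector $v\in N_e$ such as $v=e^{ss}-e^u$, and the explicit computation of $\tilde\psi_t(v)$ --- this is precisely the content of Lemma~\ref{lem43}, where the multisingular hyperbolicity hypothesis hands you such an $e$ for free because the base set $B(\Lambda)$ is defined to include the entire unit sphere of $E^{ec}(\sigma)$. In your set-up, by contrast, the limiting set of flow directions of $\gamma_n$ over $\sigma$ need not contain any such ``mixed'' direction: depending on the ratio of the tiny $E^c$- and $E^u$-components that the connecting-lemma perturbation introduces at $y_n$, the flow direction of $\gamma_n$ during its sojourn near $\sigma$ may rotate from $E^{ss}$ to $E^c$ and then to $E^u$, never passing near the $E^{ss}\oplus E^u$ plane, and in that case neither the $E^{ss}$-to-$E^c$ arc nor the $E^c$-to-$E^u$ arc produces a contradiction by the vector argument. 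You would either have to prove that the arc $\tilde\Gamma\cap T_\sigma M$ does hit such a mixed direction, or else replace the ``uniqueness over the fibre'' step with the \emph{mixed} domination of $\Delta^s$ by the flow direction (the inequality $\|\tilde\psi_t|_{\Delta^s(e)}\|/\|\Phi_t(e)\|<Ce^{-\lambda t}$ that appears in Proposition~\ref{singularhyperbolicity}, coming from Lemma 5.6 of \cite{LGW}); this inequality fails at $e''\in E^{ss}(\sigma)$ precisely because $\Delta^s(e'')\supset E^c(\sigma)$ has rate $\lambda_s$ while the flow direction contracts at rate at most $\lambda_{s-1}<\lambda_s$. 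Either route can be made to work, but as written your proposal neither secures the mixed direction nor invokes the mixed domination, and the step it actually asserts is false.
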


\begin{Lemma}\label{lem35}
Assume that $X$ is a star vector field that has the shadowing property on $CR(X)$. If there is a singularity $\sigma$ with $C(\sigma)$ nontrivial, then there is a point $x_0\in (W^s(\sigma)\cap W^u(\sigma))\setminus\{\sigma\}$ such that the compact invariant set $\Gamma=Orb(x_0)\cup\{\sigma\}$ is singular hyperbolic.
\end{Lemma}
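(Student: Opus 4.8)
By Proposition \ref{saddlevalue}, since $C(\sigma)$ is nontrivial, $\sigma$ is Lorenz-like for $X$; after replacing $X$ by $-X$ if necessary (which leaves $CR(X)$, the shadowing hypothesis on $CR(X)$, the chain class $C(\sigma)$, and the notion of singular hyperbolicity unchanged, while turning $sv(\sigma)<0$ into $sv(\sigma)>0$) we may assume $sv(\sigma)>0$, so that $\sigma$ is positively Lorenz-like and $Ind_p(\sigma)=Ind(\sigma)-1$. Every chain class is contained in $CR(X)$, so any $\delta$-pseudo orbit in $C(\sigma)$ is in particular a $\delta$-pseudo orbit in $CR(X)$; hence the shadowing property on $CR(X)$ restricts to the shadowing property on $C(\sigma)$. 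Since $C(\sigma)$ is a nontrivial chain transitive set containing the hyperbolic singularity $\sigma$ (hyperbolic because $X$ is a star vector field), Lemma \ref{lem6} applies with $\Lambda=C(\sigma)$ and yields a point $x_0\in(W^s(\sigma)\cap W^u(\sigma))\setminus\{\sigma\}$.

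I would then verify the two hypotheses of Proposition \ref{singularhyperbolicity} for this $x_0$. For the first, observe that $\Gamma=Orb(x_0)\cup\{\sigma\}$ is chain transitive: given any two of its points one chains from the first by flowing forward into a small ball around $\sigma$, jumping to $\sigma$, then jumping from $\sigma$ onto $W^u(\sigma)\cap Orb(x_0)$ at a point arbitrarily close to $\sigma$, and flowing to the second point. As $\Gamma$ is chain transitive and contains $\sigma$, every point of $\Gamma$ is chain equivalent to $\sigma$, so $\Gamma\subset C(\sigma)$; in particular $x_0\in C(\sigma)$. Since $sv(\sigma)>0$ and $C(\sigma)$ is nontrivial, Proposition \ref{Lorenz-like} gives $(W^{ss}(\sigma)\setminus\{\sigma\})\cap C(\sigma)=\emptyset$, whence $x_0\notin W^{ss}(\sigma)$.

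The second, and main, step is to construct a dominated splitting $N_{Orb(x_0)}=\Delta^s\oplus\Delta^u$ of index $Ind(\sigma)-1$ with respect to $\psi_t$, and the plan is to exploit the star property via periodic approximation of the homoclinic loop. By a connecting-lemma argument one produces $X_n\to X$ in the $C^1$ topology and periodic orbits $\gamma_n$ of $X_n$ with $\gamma_n\to\Gamma$ in the Hausdorff metric: first perturb so that $W^s(\sigma)$ and $W^u(\sigma)$ cross transversally along a homoclinic orbit $C^1$-close to $Orb(x_0)$, and then apply the Birkhoff--Smale theorem to $X_n$, selecting a periodic orbit of sufficiently long period. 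By Proposition \ref{Periodindex}, $Ind(\gamma_n)=Ind_p(\sigma)=Ind(\sigma)-1$ for all large $n$. For such $n$ the $X_n$ lie in a fixed $C^1$ neighborhood of $X$ witnessing the star property, so their periodic orbits are uniformly hyperbolic; by the standard relation between uniform hyperbolicity of periodic orbits and uniform domination of the (extended) linear Poincar\'e flow for star flows (see \cite{GW,SGW,LGW}), this gives a dominated splitting $\Delta^s_n\oplus\Delta^u_n$ of index $Ind(\sigma)-1$ for $\tilde\psi^{X_n}_t$ over the lift of $\gamma_n$ in $SM$, with domination constants independent of $n$. Passing to the limit as $n\to\infty$ — the lifts converge to the compact $\Phi^\#_t$-invariant set, namely the closure in $SM$ of $\{X(x)/\|X(x)\|:x\in Orb(x_0)\}$ — the uniform domination survives and yields a dominated splitting of index $Ind(\sigma)-1$ for $\tilde\psi^X_t$ there; restricting along the flow direction over the regular set $Orb(x_0)$ produces the required splitting $N_{Orb(x_0)}=\Delta^s\oplus\Delta^u$ for $\psi_t$.

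With both hypotheses verified, Proposition \ref{singularhyperbolicity} shows that $\Gamma=Orb(x_0)\cup\{\sigma\}$ is positively singular hyperbolic, hence singular hyperbolic, which is the assertion of the lemma. I expect the genuine obstacle to be the periodic-approximation step together with the passage to the limit in the extended linear Poincar\'e flow across the singularity: it is there that the non-compactness of the normal bundles and the star property really do the work, whereas the remainder of the argument is assembled from Lemma \ref{lem6} and Propositions \ref{saddlevalue}, \ref{Periodindex}, \ref{Lorenz-like} and \ref{singularhyperbolicity}.
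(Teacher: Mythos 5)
Your proposal follows the same overall architecture as the paper's proof: Proposition~\ref{saddlevalue} to get the Lorenz-like property, Lemma~\ref{lem6} (applied to $C(\sigma)$ using that $C(\sigma)\subset CR(X)$) to get the homoclinic point $x_0$, Proposition~\ref{Lorenz-like} to rule out $x_0\in W^{ss}(\sigma)$, a periodic approximation to build the dominated splitting $N_{Orb(x_0)}=\Delta^s\oplus\Delta^u$ of index $Ind(\sigma)-1$, and finally Proposition~\ref{singularhyperbolicity}. The places where you spell out more detail than the paper (restriction of shadowing from $CR(X)$ to $C(\sigma)$, chain transitivity of $\Gamma$ to see $\Gamma\subset C(\sigma)$) are correct.

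However, the mechanism you propose for the periodic approximation does not work. You suggest perturbing so that $W^s(\sigma)$ and $W^u(\sigma)$ cross \emph{transversally} along a homoclinic orbit and then applying the Birkhoff--Smale theorem. For a hyperbolic singularity $\sigma$ of a flow we have $\dim W^s(\sigma)+\dim W^u(\sigma)=d$, so a transversal intersection of these two manifolds would have dimension $0$; but $W^s(\sigma)\cap W^u(\sigma)$ is flow-invariant, hence any nonempty intersection other than $\{\sigma\}$ has dimension at least $1$. Thus a homoclinic orbit to a singularity is \emph{never} transversal, and Birkhoff--Smale (which is formulated for transversal homoclinic points of a hyperbolic periodic orbit) simply does not apply. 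The paper instead uses a direct ``perturbation near the singularity'' that unfolds the homoclinic loop into a nearby periodic orbit — concretely one can use a local connecting-lemma type perturbation in a small ball around $\sigma$ to redirect the returning branch of $W^u(\sigma)$ onto the outgoing branch of $W^s(\sigma)$ — and then invokes Liao's uniform domination of the periodic normal bundles $N^s_{p_n}\oplus N^u_{p_n}$ for star flows (\cite{Liao}) to pass the domination to the limit over $Orb(x_0)$. Once you replace your transversality/Birkhoff--Smale step by this local perturbation, your argument matches the paper's.
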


\begin{proof}
By Proposition \ref{saddlevalue} we know that $\sigma$ is Lorenz-like. With out loss of generality we assume that $\sigma$ is positively Lorenz-like. By Lemma \ref{lem6}, there is $x_0\in (W^s(\sigma)\cap W^u(\sigma))\setminus \{\sigma\} $ as $C(\sigma)$ is chain transitive. We show that $\Gamma=Orb(x_0)\cup\{\sigma\}$ is positively singular hyperbolic.

It is easy to see that $\Gamma\subset C(\sigma)$. By Proposition \ref{Lorenz-like} we know that $x_0\notin W^{ss}(\sigma)$. By a perturbation near the singularity, we can get a sequence $X_n\to X$ with a periodic orbit $\gamma_n$ of $X_n$ such that $\gamma_n \to\Gamma $ in the Hausdorff metric as $n\to\infty$. By Proposition \ref{Periodindex} we know that for $n$ large, the index of $\gamma_n$ is $Ind(\sigma)-1$. For any $t\in\mathbb{R}$, we can find a sequence of $p_n\in\gamma_n$ with $p_n\to \varphi_t(x_0)$ such that $N^s_{p_n}$ accumulate on $\Delta^s_{\varphi_t(x_)}$ and $N^u_{p_n}$ accumulate on $\Delta^u_{\varphi_t(x_)}$ as $n\to\infty$. Since $N^s_{p_n}\oplus N^u_{p_n}$ are uniformly dominated \cite{Liao}, the splitting $N_{Orb(x_0)}=\Delta^s\oplus\Delta^u$ is a dominated splitting of index $Ind(\sigma)-1$ with respect to $\psi_t$. By Proposition \ref{singularhyperbolicity}, $\Gamma$ is a positively singular hyperbolic set.
\end{proof}

A point $x$ is called a {\it preperiodic point} of $X$ if there is a sequence $X_n\to X$ together with periodic points $p_n$ of $X_n$ such that $p_n\to x$ as $n\to\infty$. Denote by $P_*(X)$ the set of all preperiodic points of $X$. The following proposition is Theorem A' of \cite{GW}.
\begin{Proposition}
If $X$ is a star vector field and $P_*(X)$ contains no singularity, then $X$ satifies Axiom A and the no-cycle condition.
\end{Proposition}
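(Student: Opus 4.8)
\emph{Proof proposal.} The plan is to convert the star hypothesis, read along the closure $\overline{Per(X)}$ of the periodic orbits, into a genuine hyperbolic structure, to use the absence of singularities in $P_*(X)$ to transport that structure from the linear Poincar\'e flow to the tangent flow, to identify $\Omega(X)$ with the resulting hyperbolic set together with finitely many isolated hyperbolic singularities, and finally to rule out cycles by a connecting--lemma argument. This is the route of \cite{GW}, of which I sketch only the skeleton.

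\emph{Step 1: $P_*(X)$ is a hyperbolic set.} Since $X$ is a star vector field, Liao--Ma\~n\'e theory provides a $C^1$ neighbourhood $\mathcal U$ of $X$ and constants $C>1,\lambda>0$ such that every periodic orbit of every $Y\in\mathcal U$ is hyperbolic with hyperbolic splitting that is $(C,\lambda)$-dominated and $(C,\lambda)$-contracting/expanding for the linear Poincar\'e flow $\psi^Y$. Taking limits over periodic orbits of $X$ and using Liao's selecting and shadowing lemmas to control intermediate growth, one obtains over $\overline{Per(X)}\setminus Sing(X)$ a $\psi_t$-invariant splitting $\Delta^s\oplus\Delta^u$ with the same uniform estimates. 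By hypothesis $P_*(X)\supseteq\overline{Per(X)}$ contains no singularity, so $\overline{Per(X)}$ is a compact invariant set of regular points on which $\inf\|X\|>0$, and the splitting extends with the same constants over all of $P_*(X)$. Here the flow direction $\langle X\rangle$ is a neutral, uniformly sized, flow-invariant line; since passing from $\psi_t$ to $\Phi_t$ costs only the bounded factor $\|X(\varphi_t(\cdot))\|/\|X(\cdot)\|$, the domination and the uniform contraction on $\Delta^s$ and expansion on $\Delta^u$ all survive for $\Phi_t$, so $T_{P_*(X)}M=\Delta^s\oplus\langle X\rangle\oplus\Delta^u$ is a hyperbolic splitting for the flow. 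This is the mechanism of Proposition \ref{singularhyperbolicity}, now global and free of any center--unstable subtlety precisely because no singularity is present; hence $P_*(X)$ is hyperbolic.

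\emph{Step 2: Axiom A.} Every regular non-wandering point of $X$ lies, by the $C^1$ closing lemma for flows, in $P_*(X)$; being a non-wandering point inside the hyperbolic set $P_*(X)$, it is then a limit of periodic points of $X$ by the hyperbolic closing/shadowing lemma. Hence $\Omega(X)\setminus Sing(X)=\overline{Per(X)}$, a hyperbolic set. Each singularity of $X$ is hyperbolic by the star property; a singularity outside $P_*(X)$ cannot accumulate regular non-wandering points (these lie in the closed set $P_*(X)$) nor other singularities (finitely many), so it is isolated in $\Omega(X)$. Therefore $\Omega(X)$ is the disjoint union of the hyperbolic set $\overline{Per(X)}$ and finitely many isolated hyperbolic singularities: it is hyperbolic and equals $\overline{Per(X)\cup Sing(X)}$, which is Axiom A.

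\emph{Step 3: no cycles, and the main obstacle.} Take the spectral decomposition $\Omega(X)=\Lambda_1\cup\cdots\cup\Lambda_k$ into basic sets. If some subfamily of the $\Lambda_i$'s formed a cycle of heteroclinic connections, that subfamily together with the connecting orbits would lie in a single chain class, and the connecting lemma would let one perturb the connections into a periodic orbit $\gamma$ of some $Y$ near $X$ that shadows the whole cycle; since the cycle threads basic sets of possibly different indices along only weakly hyperbolic connecting orbits, $\gamma$ could not be $(C,\lambda)$-uniformly hyperbolic for $\psi^Y$, contradicting Step 1 (equivalently, contradicting the star property). I expect this last step to be the real difficulty: turning an abstract cycle among basic sets into an honest violation of the star condition, while keeping the Liao uniformity under control through the connecting-lemma perturbation, is where essentially all the work of \cite{GW} lies, whereas Steps 1--2 are bookkeeping around known perturbation lemmas.
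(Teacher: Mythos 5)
The paper does not prove this proposition: it is quoted verbatim as Theorem~A$'$ of Gan--Wen \cite{GW} and then used as a black box. So the comparison is really between your sketch and the argument of \cite{GW}. Your outline identifies the correct route: uniform Liao--Ma\~n\'e estimates on periodic orbits, passage to the closure, transfer of hyperbolicity from the linear Poincar\'e flow $\psi_t$ to the tangent flow $\Phi_t$ using $\inf_{P_*(X)}\|X\|>0$, the closing/shadowing lemmas for Axiom~A, and a connecting-lemma argument against cycles. But as you acknowledge yourself, the two places where all the work actually sits are left open. In Step~1, ``uniform hyperbolicity at the period'' for periodic orbits does not pass to ``uniform hyperbolicity along orbits'' on the closure without Liao's selecting/sifting-lemma machinery; you invoke it by name but do not control the intermediate-time behaviour, which is exactly what the estimate hinges on. In Step~3, you describe what a no-cycle argument would have to achieve and then remark that it is ``where essentially all the work of \cite{GW} lies'' --- which is true, and means the step is not actually carried out. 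So the proposal is a correct skeleton of the cited theorem but not a proof of it; it defers the substantive content to \cite{GW} in exactly the way the paper itself does, and the genuine gaps are the Liao uniformity argument and the cycle-breaking perturbation.
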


Now we prove the main theorem of this section.

\vskip 0.2cm
{\bf Theorem B.}
If $X$ is a star vector field that has the shadowing property on $CR(X)$, then $X$ satisfies Axiom A and the no-cycle condition.
\vskip 0.2cm
\begin{proof}
Assume $X$ is a star vector field that has the shadowing property on $CR(X)$. It suffices to prove that $P_*(X)$ contains no singularity. Assume the contrary that there is a singularity $\sigma\in P_*(X)$. There is a sequence $X_n$ together with  a periodic orbit $\gamma_n$ of $X_n$ and $p_n\in\gamma_n$ with $\lim_{n\to\infty }p_n=\sigma$. Without loss of generality we assume that $\gamma_n$ has a limit in the Hausdorff metric. Since $\sigma$ is hyperbolic, the Hausdorff limit of $\gamma_n$ can not be $\{\sigma\}$. Hence there are points $q_n\in\gamma_n$ such that $q_n\to y\neq\sigma$ as $n\to\infty$. For any $\delta>0$ we can find $N$ such that $\gamma_n$ is a $\delta$-pseudo orbit of $X$ for $n\geq N$. Hence $y$ is contained in the chain class $C(\sigma)$. This proves that $C(\sigma)$ is nontrivial. By Proposition \ref{lem35} there is a homoclinic point $x_0\in (W^s(\sigma)\cap W^u(\sigma))\setminus\{\sigma\}$ and $\Gamma=Orb(x_0)\cup\{\sigma\}$ is a singular hyperbolic set. Since $\Gamma\subset CR(X)$, $\Gamma$ has the shadowing property. This contradicts Proposition \ref{mainprop}, completing the proof of Theorem B.
\end{proof}

\section{The Proof of Theorem C}
In this section we prove that if a multisingular hyperbolic chain transitive set has the intrinsic shadowing property then it is hyperbolic. First we recall the definition of multisingular hyperbolicity proposed by Bonatti-da Luz \cite{Bl}.

Let $\Lambda$ be a compact invariant set of a $C^1$ vector field $X$ containing no nonhyperbolic singularity. Let $\sigma\in\Lambda$ be a hyperbolic singularity of $X$. As usual, we list all Lyapunov exponents of $\Phi_1|_{T_\sigma M}$ (counting the multiplicities) as
$$\lambda_1\leq\lambda_2\leq\cdots\lambda_s<0<\lambda_{s+1}\leq\cdots\leq\lambda_d.$$
Let $j\leq s$ be the largest natural number with the following two properties:
\begin{itemize}
\item[(i)] $\lambda_j<\lambda_{j+1}$;
\item[(ii)] $W^{es}(\sigma)\cap\Lambda=\{\sigma\}$, where $W^{es}(\sigma)$ is the strong stable mainifold tangent to the direct sum of the generalized eigenspaces of eigenvalues $\eta$ with $|\eta|<e^{\lambda_{j+1}}$.
\end{itemize}
We call the direct sum of the generalized eigenspaces of eigenvalues $\eta$ with $|\eta|<e^{\lambda_{j+1}}$ the {\it escaping strong stable spaces } of $\sigma$ with respect to $\Lambda$ and denote it by $E^{es}(\sigma)$. Similarly we define the {\it escaping strong unstable spaces} $E^{eu}(\sigma)$ of $\sigma$ with respect to $\Lambda$. We also denote by $E^{ec}(\sigma)$ the direct sum of the other generalized eigenspaces of $\Phi_1|_{T_\sigma M}$. It is easy to see that the set
$$B(\Lambda)=\{\frac{X(x)}{\|X(x)\|}: x\in\Lambda\setminus Sing(X)\}\cup(\bigcup_{\sigma\in\Lambda\cap Sing(X)}\{v\in E^{ec}(\sigma):\|v\|=1\})$$
is a compact invariant set of $\Phi_t^\#$.

Since $\Lambda$ contains only finitely many hyperbolic singularities $\{\sigma_1, \sigma_2, \cdots, \sigma_n\}$, there are mutually disjoint open sets $U_1, U_2, \cdots, U_n$ in $SM$ such that $B(\Lambda)\cap T_{\sigma_i} M\subset U_i$ for every $i=1,2,\cdots, n$. Given any $i=1,2,\cdots,n$, Lemma 41 of \cite{Bl} says that there exists (an unique) $h^{\sigma_i}: B(\Lambda)\times\mathbb{R}\to (0,+\infty)$ with the following properties:
\begin{itemize}
\item[(a)] $h^{\sigma_i}(e, t+s)=h^{\sigma_i}(e,t)\cdot h^{\sigma_i}(\Phi_t^\#(e),s)$ for any $e\in B(\Lambda)$ and $t,s\in\mathbb{R}$;
\item[(b)] if $e\in U_i$ and $\Phi_t^\#(e)\in U_i$ then $h^{\sigma_i}(e, t)=\|\Phi_t(e)\|$;
\item[(c)] if $e\notin U_i$ and $\Phi_t^\#(e)\notin U_i$, then $h^{\sigma_i}(e, t)=1$.
\end{itemize}
As usual, we frequently write $h^{\sigma_i}(e,t)$ as $h^{\sigma_i}_t(e)$.

\begin{Definition}
Let $\Lambda$ be a compact invariant set of a $C^1$ vector field. We say that $\Lambda$ is multisingular hyperbolic if
\begin{enumerate}
\item Every singularity in $\Lambda$ is hyperbolic.
\item There is a continuous dominated splitting $N_{B(\Lambda)}=\Delta^s\oplus\Delta^u$ with respect to $\tilde\psi_t$.
\item There exist two sets $S_+, S_{-}\subset \Lambda\cap Sing(X)$ and constants $C\geq 1, \lambda>0$, such that $(\displaystyle\prod_{\sigma\in S_+}h^\sigma_t)\cdot\tilde\psi_t|_{\Delta^s}$  and $(\displaystyle\prod_{\sigma\in S_-}h^\sigma_{-t})\cdot\tilde\psi_{-t}|_{\Delta^u}$ are $(C,\lambda)$-contracting, that is,
$$\|(\displaystyle\prod_{\sigma\in S_+}h^\sigma_t)\cdot\tilde\psi_t|_{\Delta^s(e)}\|\leq Ce^{-\lambda t},$$
$$\|(\displaystyle\prod_{\sigma\in S_-}h^\sigma_{-t})\cdot\tilde\psi_{-t}|_{\Delta^u(e)}\|\leq Ce^{-\lambda t}$$
for all $e\in B(\Lambda)$ and $t\geq 0$. Here $S_+$ (or $S_-$) could be empty, and in that case we will set $\displaystyle\prod_{\sigma\in S_+}h^\sigma_t\equiv1$ (or $\displaystyle\prod_{\sigma\in S_-}h^\sigma_{-t}\equiv 1$ respectively).
\end{enumerate}
\end{Definition}

If $\dim(\Delta^s(e))=i$ for all $e\in B(\Lambda)$, we will call $\Lambda$  a multisingular hyperbolic set {\it of index} $i$.
The following lemma is Proposition 56 of \cite{Bl}.
\begin{Lemma}\label{lem42}
Let $\Lambda$ be a multisingular hyperbolic set of $X$ of index $i$. If a singularity $\sigma\in\Lambda$ satisfies $(W^s(\sigma)\setminus\{\sigma\})\cap\Lambda\neq\emptyset$ and $(W^u(\sigma)\setminus\{\sigma\})\cap\Lambda\neq\emptyset$, then $\sigma$ is Lorenz-like and $Ind_p(\sigma)=i$.
\end{Lemma}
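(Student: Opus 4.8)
The plan is to localise the multisingular structure at the singularity $\sigma$ and read the needed information off the dominated splitting on the ``blown-up'' fibre $\mathcal E_\sigma:=\{v\in E^{ec}(\sigma):\|v\|=1\}\subseteq B(\Lambda)$. After an equivalent change of metric making the generalised eigenspaces of $\Phi_1|_{T_\sigma M}$ mutually orthogonal, $\mathcal E_\sigma$ is a round sphere, and on it the only active reparametrising weight is $h^\sigma_t(e)=\|\Phi_t(e)\|$, all other $h^{\sigma'}$ being identically $1$ there since the $U_i$ are disjoint. Write $s=Ind(\sigma)$.

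First I would run a Lemma~\ref{lem1}-type analysis at $\sigma$. Since $(W^s(\sigma)\setminus\{\sigma\})\cap\Lambda\neq\emptyset$, the full stable manifold meets $\Lambda$ off $\sigma$, so the choice $j=s$ is disallowed in the definition of $E^{es}(\sigma)$; more precisely every accumulation direction in $T_\sigma M$ of $X(\varphi_t(x))/\|X(\varphi_t(x))\|$ along an orbit $x\in(W^s(\sigma)\setminus\{\sigma\})\cap\Lambda$ lies in $E^{ec}(\sigma)\cap E^s_\sigma$, and symmetrically $E^{ec}(\sigma)\cap E^u_\sigma\neq\{0\}$; so $E^{ec}(\sigma)$ contains both negative and positive eigendirections. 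Over $\mathcal E_\sigma$ the normal fibre splits $\tilde\psi_t$-invariantly as $N_e=E^{es}(\sigma)\oplus(e^\perp\cap E^{ec}(\sigma))\oplus E^{eu}(\sigma)$, with $\tilde\psi_t=\Phi_t$ on $E^{es}(\sigma)$ and on $E^{eu}(\sigma)$. Since $E^{es}(\sigma)$ carries the strictly smallest and $E^{eu}(\sigma)$ the strictly largest exponents present in $N_e$, domination forces $E^{es}(\sigma)\subseteq\Delta^s$ and $E^{eu}(\sigma)\subseteq\Delta^u$ on $\mathcal E_\sigma$, and the complement of $E^{es}(\sigma)$ in $\Delta^s$ meets neither; hence $\Delta^s(e)=E^{es}(\sigma)\oplus D(e)$ where $D$ is a continuous subbundle of the bundle $e\mapsto e^\perp\cap E^{ec}(\sigma)$ — canonically $T\mathcal E_\sigma$ — of rank $i-\dim E^{es}(\sigma)$, and $\Delta^u\cap E^{ec}$ is its complement in $T\mathcal E_\sigma$.

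The core of the argument is then to show that $\lambda_s$ is simple with $sv(\sigma)>0$, or else $\lambda_{s+1}$ is simple with $sv(\sigma)<0$, and that $Ind_p(\sigma)=i$. Evaluating Condition~3 along the finitely many $\Phi^\#_t$-fixed eigendirections lying in $E^{ec}(\sigma)$ turns it into scalar inequalities among eigenvalue exponents: over a fixed eigendirection $\hat e$ of exponent $\mu$, Condition~3 forces $\mu+\nu<0$ for every Lyapunov exponent $\nu$ of $\tilde\psi_t|_{\Delta^s(\hat e)}$ when $\sigma\in S_+$, and $\mu+\nu>0$ for every exponent $\nu$ of $\tilde\psi_t|_{\Delta^u(\hat e)}$ when $\sigma\in S_-$, while when $\sigma\notin S_+$ (resp.\ $\notin S_-$) it makes $\tilde\psi_t|_{\Delta^s}$ (resp.\ $\tilde\psi_{-t}|_{\Delta^u}$) contracting on $\mathcal E_\sigma$ outright — and the case $\sigma\notin S_+\cup S_-$ is excluded because it would make the extended linear Poincar\'e flow hyperbolic near $\sigma$, which is incompatible with $(W^s(\sigma)\setminus\{\sigma\})\cap\Lambda\neq\emptyset$. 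Combining these inequalities with the constraint $\Delta^s=E^{es}(\sigma)\oplus D$, $D\subseteq T\mathcal E_\sigma$, from the previous step, and with the topological fact that the tangent bundle of an even-dimensional sphere has no splitting into two subbundles of positive rank, one forces the multiplicity of the relevant weak eigenvalue ($\lambda_s$ if $\sigma\in S_+$, $\lambda_{s+1}$ if $\sigma\in S_-$) down to $1$ and $sv(\sigma)\neq0$, with its sign tied to whether the weak eigenline lies in $\Delta^s$ or in $\Delta^u$; reading off $\dim\Delta^s$ then gives $i=Ind(\sigma)-1=Ind_p(\sigma)$ when $sv(\sigma)>0$ and $i=Ind(\sigma)=Ind_p(\sigma)$ when $sv(\sigma)<0$. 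Thus $\sigma$ is Lorenz-like and $Ind_p(\sigma)=i$.

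The genuine obstacle is this last paragraph. Condition~3 looks ``soft'' — it is only a contraction condition for an $h^\sigma$-reweighted cocycle — and the work lies in extracting from it rigid information about the spectrum of $\Phi_1|_{T_\sigma M}$: one must analyse the dominated splitting over the whole of $\mathcal E_\sigma$, not merely the fixed eigendirections, handle the odd-dimensional spheres on which the Euler-class obstruction is silent, and propagate the conclusion by continuity of the dominated splitting from $\mathcal E_\sigma$ to the regular part of $B(\Lambda)$. The first two paragraphs, and the computation of $Ind_p(\sigma)$ once the spectrum of $\sigma$ is pinned down, are routine.
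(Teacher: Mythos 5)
The paper does not prove this statement: Lemma~\ref{lem42} is quoted verbatim as Proposition~56 of Bonatti--da~Luz~\cite{Bl}, so there is no in-text proof to compare against. I therefore assess your proposal on its own merits.

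Your overall framework is the right one, and it is the one that a correct proof must use: localize on the invariant sphere $\mathcal{E}_\sigma=\{v\in E^{ec}(\sigma):\|v\|=1\}\subset B(\Lambda)$, observe that on $\mathcal{E}_\sigma$ the only nontrivial cocycle weight is $h^\sigma_t(e)=\|\Phi_t(e)\|$, use the hypothesis on $W^s(\sigma)$ and $W^u(\sigma)$ to produce both a stable eigendirection $e^-$ (exponent $\lambda_s$) and an unstable eigendirection $e^+$ (exponent $\lambda_{s+1}$) inside $\mathcal{E}_\sigma$, and then read Condition~3 off at these $\Phi^\#_t$-fixed directions. You also honestly flag the core paragraph as unfinished, which it is. The specific gaps are more serious than you suggest, though.

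First, the Euler-class obstruction on $T\mathcal{E}_\sigma$ is not the right tool, not merely an incomplete one. Beyond the parity problem you acknowledge, the conclusion of the lemma is perfectly consistent with $\Delta^s\cap T\mathcal{E}_\sigma$ being a proper nontrivial subbundle (for instance when $E^{cs}(\sigma)=E^{ec}(\sigma)\cap E^s_\sigma$ has dimension $\ge 2$), so an argument that tries to force $D\in\{0,T\mathcal{E}_\sigma\}$ cannot succeed in general. What actually closes the argument is pure dimension counting at $e^-$ and $e^+$: at a $\Phi^\#_t$-fixed eigendirection the (reparametrised) dominated splitting of $N_e$ cannot split a repeated eigenvalue between $\Delta^s$ and $\Delta^u$, Condition~3 decides on which side each remaining eigenvalue falls according to whether $\sigma\in S_+$, $\sigma\in S_-$, both, or neither, and equating the resulting formulas for $\dim\Delta^s(e^-)=\dim\Delta^s(e^+)=i$ simultaneously pins down $i=Ind_p(\sigma)$, forces $sv(\sigma)\neq 0$, and forces simplicity of the weak eigenvalue on the appropriate side. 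You gesture at exactly this (``scalar inequalities among eigenvalue exponents'') and then pivot to topology; the pivot is the mistake.

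Second, ruling out $\sigma\notin S_+\cup S_-$ is asserted, not argued; ``hyperbolicity of the extended linear Poincar\'e flow near $\sigma$'' being incompatible with $W^s(\sigma)\cap\Lambda\neq\{\sigma\}$ is not self-evident. The correct and elementary reason is again the dimension count: without any $h^\sigma$-weight, $\Delta^s$ is purely contracting and $\Delta^u$ purely expanding at every fixed eigendirection of $\mathcal{E}_\sigma$, which forces $\dim\Delta^s(e^-)=Ind(\sigma)-1$ and $\dim\Delta^s(e^+)=Ind(\sigma)$, contradicting constancy of the index $i$.

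Third, your first step, $\Delta^s|_{\mathcal{E}_\sigma}=E^{es}(\sigma)\oplus D$ with $D\subseteq T\mathcal{E}_\sigma$, is stated as if it were automatic from domination. It is not: whether all of $E^{es}(\sigma)$ lies in $\Delta^s$ at a given $e\in\mathcal{E}_\sigma$ depends on the eigenvalue gaps and on which of $S_\pm$ contain $\sigma$, and in the case $\sigma\in S_+\cap S_-$ it is not even clear a priori. You should either prove this inclusion after fixing the case, or avoid relying on it and argue only at $e^\pm$, where the conclusion is cleaner.

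Finally, there is an omitted case: if the weak eigenvalue ($\lambda_s$ or $\lambda_{s+1}$) is non-real, there is no fixed eigendirection to evaluate at; instead $\Phi^\#_t$ preserves a circle in $\mathcal{E}_\sigma$, and one must average Condition~3 over that circle to extract the scalar inequalities. This case is precisely what simplicity of the weak eigenvalue has to exclude, so it cannot be left implicit. In summary: right starting point, right localisation, but the load-bearing part of the proof is absent and the tool you propose in its place would not carry the load even if completed.
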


The following lemma is taken from  Crovisier-da Luz-Yang-Zhang \cite{ClYZ}. For completeness we give a sketch of the  proof here.
\begin{Lemma}\label{lem43}
Let $\Lambda$ be a multisingular hyperbolic set and $\sigma\in\Lambda$ be a hyperbolic singularity with $(W^{s}(\sigma)\setminus\{\sigma\})\cap\Lambda\neq\emptyset$ and $(W^{u}(\sigma)\setminus\{\sigma\})\cap\Lambda\neq\emptyset$. If $\sigma$ is positively Lorenz-like then $W^{ss}(\sigma)\cap\Lambda=\{\sigma\}$.
\end{Lemma}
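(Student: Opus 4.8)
The plan is to argue by contradiction, exactly in the spirit of Lemma~\ref{lem5} and Lemma~\ref{lem7}, but now working with the objects adapted to multisingular hyperbolicity: the bundle $B(\Lambda)$, the extended linear Poincar\'e flow $\tilde\psi_t$, the cocycle $h^\sigma_t$, and the strong stable manifold $W^{ss}(\sigma)$ associated (via Lemma~\ref{lem42}) to the positively Lorenz-like singularity $\sigma$. Suppose there is a point $x\in(W^{ss}(\sigma)\setminus\{\sigma\})\cap\Lambda$. The first step is to pin down the index: by Lemma~\ref{lem42}, since $\sigma$ has nontrivial local stable and unstable intersections with $\Lambda$, it is Lorenz-like with $Ind_p(\sigma)=i$, the index of the multisingular hyperbolic splitting $N_{B(\Lambda)}=\Delta^s\oplus\Delta^u$; since $\sigma$ is positively Lorenz-like, $\dim W^{ss}(\sigma)=Ind(\sigma)-1$ and $\dim\Delta^s=Ind(\sigma)-1$ as well.

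Next I would analyze the behavior of $\Phi_t^\#$ along the orbit of $x$. Because $x\in W^{ss}(\sigma)$, the forward orbit $\varphi_t(x)$ converges to $\sigma$ tangentially to $E^{ss}(\sigma)$; more precisely, $X(\varphi_t(x))\neq 0$ for all $t$, and the direction $\Phi_t^\#(X(x)/\|X(x)\|)$ converges, as $t\to+\infty$, into the subspace $E^{ss}(\sigma)=E^{es}(\sigma)\oplus E^c(\sigma)$-part that lies strictly inside the stable bundle faster than the central direction. The key geometric point is that any accumulation point $e$ of $\{\Phi_t^\#(X(\varphi_s(x))/\|X(\varphi_s(x))\|): t\geq0\}$ lies in $E^{ss}(\sigma)$, hence in particular $e\notin E^{ec}(\sigma)$ — but $e$ must lie in $B(\Lambda)\cap T_\sigma M\subset\{v\in E^{ec}(\sigma):\|v\|=1\}$, since $B(\Lambda)$ is $\Phi_t^\#$-invariant and its fiber over $\sigma$ is exactly the unit sphere of $E^{ec}(\sigma)$. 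This is the contradiction: the flow direction along an orbit in $W^{ss}(\sigma)$ leaves the escaping-central cone and cannot accumulate on $E^{ec}(\sigma)$. One must be careful here that $x$ is genuinely in the \emph{strong} stable manifold, i.e. the generalized eigenspaces below $e^{\lambda_{j+1}}$, so that the escaping strong stable space $E^{es}(\sigma)$ (whose defining property is $W^{es}(\sigma)\cap\Lambda=\{\sigma\}$) is strictly contained in the stable eigenspaces and $E^{ss}(\sigma)$ contains the $\lambda_s$-central direction $E^c(\sigma)$; the hypothesis that $(W^s(\sigma)\setminus\{\sigma\})\cap\Lambda\ne\emptyset$ and $(W^u(\sigma)\setminus\{\sigma\})\cap\Lambda\ne\emptyset$ is what forces, through Lemma~\ref{lem42} and the definition of $j$, the correct relation between $E^{es}(\sigma)$, $E^{ss}(\sigma)$ and $E^{ec}(\sigma)$.

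An alternative, more self-contained route (mirroring Lemma~\ref{lem2} and Lemma~\ref{lem5}) is to use the $\tilde\psi_t$-cocycle contraction directly: on $B(\Lambda)\cap T_\sigma M$ one has $\Delta^s(e)=E^{es}(\sigma)\oplus(\text{lower part})$ and the contraction condition $\|(\prod_{\sigma'\in S_+}h^{\sigma'}_t)\tilde\psi_t|_{\Delta^s(e)}\|\le Ce^{-\lambda t}$. If $x\in W^{ss}(\sigma)\setminus\{\sigma\}$, then $X(x)$, transported forward, would have to live eventually in the subbundle corresponding to $\Delta^s$ over points near $\sigma$ (since its direction enters $E^{ss}(\sigma)$), whereas the flow direction, being a genuine $\Phi_t$-invariant line, cannot be both contracted by the $h$-weighted cocycle and have bounded norm $\|X(\varphi_t(x))\|\to 0$ at the ``wrong'' rate — a rescaling argument using property~(b) of $h^\sigma_t$ (that $h^\sigma_t(e,\cdot)=\|\Phi_t(e)\|$ while $e$ stays in $U_i$) converts the weighted contraction of $\Delta^s$ into honest contraction of the unweighted $\tilde\psi_t$ on the flow line, which contradicts domination $\Delta^s\oplus\Delta^u$ together with the fact that the flow direction at $\sigma$ is the $E^c(\sigma)\subset E^{ec}(\sigma)$ direction.

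The hard part will be handling the fiber $B(\Lambda)\cap T_\sigma M$ correctly: one must verify that the directions picked up by $X(\varphi_t(x))/\|X(\varphi_t(x))\|$ as $t\to+\infty$ really do accumulate inside $E^{ss}(\sigma)$ and \emph{not} merely inside the full stable eigenspace — i.e. that the $\lambda_s$-eigendirection (the central direction) is excluded because $x\in W^{ss}(\sigma)$ rather than $x\in W^s(\sigma)\setminus W^{ss}(\sigma)$ — and simultaneously that these accumulation directions do lie in $B(\Lambda)$, which forces them into $E^{ec}(\sigma)$. Reconciling ``the direction accumulates in $E^{ss}(\sigma)$'' with ``the direction lies in $E^{ec}(\sigma)\cap\{\|v\|=1\}$'' is precisely the incompatibility $E^{ss}(\sigma)\cap E^{ec}(\sigma)=\{0\}$, which gives the contradiction $X(x)=0$, absurd since $x\ne\sigma$. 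I expect the bookkeeping around the definition of $j$ and the three subspaces $E^{es}(\sigma)$, $E^{ec}(\sigma)$, $E^{eu}(\sigma)$ — making sure $W^{ss}(\sigma)$ as defined in the statement (the Lorenz-like strong stable manifold, $\dim=Ind(\sigma)-1$) really sits inside the ``escaping + faster'' part relative to the $E^{ec}(\sigma)$ that defines $B(\Lambda)$ — to be the step requiring the most care.
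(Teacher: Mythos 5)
Your proposed contradiction hinges on the claim that $E^{ss}(\sigma)\cap E^{ec}(\sigma)=\{0\}$, and that is precisely where the argument breaks. Under the hypothesis you are trying to refute — that there is a point $x\in(W^{ss}(\sigma)\setminus\{\sigma\})\cap\Lambda$ — the definition of the escaping index $j$ forces $j\le s-2$: if one had $j\ge s-1$ then $E^{es}(\sigma)\supseteq E^{ss}(\sigma)$, so $W^{es}(\sigma)\supseteq W^{ss}(\sigma)$, and condition (ii) in the definition of $j$ would give $W^{ss}(\sigma)\cap\Lambda\subseteq W^{es}(\sigma)\cap\Lambda=\{\sigma\}$, contradicting the existence of $x$. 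Thus $\lambda_{j+1}\le\lambda_{s-1}<\lambda_s$, and every generalized eigenspace for eigenvalues $\eta$ with $e^{\lambda_{j+1}}\le|\eta|<e^{\lambda_s}$ sits simultaneously inside $E^{ss}(\sigma)$ and inside $E^{ec}(\sigma)$. So $E^{ss}(\sigma)\cap E^{ec}(\sigma)$ is \emph{non-trivial} exactly in the case at hand; your ``direction accumulates in $E^{ss}$ yet must lie in $E^{ec}$, absurd'' step produces no contradiction at all, and the observation that $\Phi_t^\#(X(x)/\|X(x)\|)$ accumulates inside $E^{ss}(\sigma)\cap E^{ec}(\sigma)$ is in fact perfectly consistent. (Side remark: the identification $E^{ss}(\sigma)=E^{es}(\sigma)\oplus E^c(\sigma)$ you wrote is also off: $E^c(\sigma)$ is the $\lambda_s$-eigendirection, which is \emph{not} in $E^{ss}(\sigma)$; what holds is $E^{ss}(\sigma)=E^{es}(\sigma)\oplus\bigl(E^{ss}(\sigma)\cap E^{ec}(\sigma)\bigr)$.)

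The paper's actual argument takes the non-triviality of $E^{ec}(\sigma)\cap E^{ss}(\sigma)$ (and of $E^{ec}(\sigma)\cap E^{u}(\sigma)$) as its \emph{starting point}, not as a contradiction. It picks unit vectors $e^{ss}\in E^{ec}(\sigma)\cap E^{ss}(\sigma)$ and $e^u\in E^{ec}(\sigma)\cap E^u(\sigma)$, sets $e=(e^{ss}+e^u)/\|e^{ss}+e^u\|\in E^{ec}(\sigma)\cap B(\Lambda)$, and exploits that $\Phi_t^\#(e)$ limits onto $E^u(\sigma)$ as $t\to+\infty$ and onto $E^{ss}(\sigma)$ as $t\to-\infty$. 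Orthogonality gives $v=e^{ss}+e^u\in N_e$, and a direct computation of $\tilde\psi_t(v)$ shows its direction goes to $E^{ss}(\sigma)$ as $t\to+\infty$ and to $E^u(\sigma)$ as $t\to-\infty$. But the dominated splitting $\Delta^s(e)\oplus\Delta^u(e)$, together with $\Delta^s(e')=E^{ss}(\sigma)$ and $\Delta^u(e'')=E^u(\sigma)$ at the $\omega$- and $\alpha$-limits, forces the opposite asymptotics no matter whether $v\in\Delta^s(e)$ or not — that dichotomy is the real contradiction. Note that the point $x$ never enters directly; the hypothesis is only used to get the non-trivial intersections of subspaces. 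Your ``alternative, more self-contained route'' also misfires for a structural reason: the flow direction $X(\varphi_t(x))$ lies along the orbit, hence is orthogonal to the fiber $N_{\Phi_t^\#(e)}$ on which $\tilde\psi_t$ and $\Delta^s$ live, so one cannot ``transport $X(x)$ into the $\Delta^s$ subbundle'' and apply the weighted cocycle contraction to it. If you want a self-contained proof you must, as the paper does, work with normal vectors built from $e^{ss}$ and $e^u$ inside $N_e$ over the fiber $B(\Lambda)\cap T_\sigma M$.
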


\begin{proof}
Assume the contrary that $(W^{ss}(\sigma)\setminus\{\sigma\})\cap\Lambda\neq\emptyset$. Then by the definition of escaping strong stable manifold we know that $E^{ec}(\sigma)\cap E^{ss}(\sigma)\neq\emptyset$. Likewise, from $(W^{u}(\sigma)\setminus\{\sigma\})\cap\Lambda\neq\emptyset$ we obtain $E^{ec}(\sigma)\cap E^{u}(\sigma)\neq\emptyset$. Take a unit vector  $e^{ss}\in E^{ec}(\sigma)\cap E^{ss}(\sigma)$ and a unit vector $e^{u}\in E^{ec}(\sigma)\cap E^{u}(\sigma)$, and let   $$e=\frac{e^{ss}+e^{u}}{\|e^{ss}+e^{u}\|}\in E^{ec}.$$   It is easy to see that $\Phi_t^\#(e)$ accumulate on $E^{u}(\sigma)$ as $t\to+\infty$ and on $E^{ss}(\sigma)$ as $t\to-\infty$. Without loss of generality we assume that $E^{ss}, E^{c}, E^{u}$ are mutually orthogonal. Let $e'$ be an accumulation point of $\{\Phi_t^\#(e):t\geq0\}$. By the uniqueness of dominated splitting, $\Delta^s(e')=E^{ss}(\sigma)$. Similarly, if $e''$ is an accumulation point of $\{\Phi_t^\#(e):t\leq 0\}$ then $\Delta^u(e'')=E^{u}(\sigma)$.   Let $N_e=\Delta^s(e)\oplus\Delta^u(e)$ be the dominated splitting at $e$.  Take the vector $v=e^{ss}+e^{u}\in N_e$. An easy calculation shows that
$$\tilde\psi_t(v)=2\frac{\|\Phi_t(e^u)\|^2 \Phi_t(e^{ss})-\|\Phi_t(e^{ss})\|^2\Phi_t(e^u)}{\|\Phi_t(e^{ss})\|^2+\|\Phi_t(e^u)\|^2}.$$
We can see that the direction of $\tilde\psi_t(v)$ tends to $E^{ss}(\sigma)$ as $t\to+\infty$ and tends to $E^{u}(\sigma)$ as $t\to-\infty$. If $v\in\Delta^s(e)$, then $\tilde\psi_t(v)$ will tend to the direction of $E^{ss}(\sigma)$ as $t\to-\infty$ by the invariance of $\Delta^s$. If $v\notin\Delta^s(e)$, then $\tilde\psi_t(v)$ will tend to the direction of $\Delta^u(\Phi_t^\#(e))$, hence to $E^{u}(\sigma)$ as $t\to+\infty$. In both cases we get a contradiction. Hence $W^{ss}(\sigma)\cap\Lambda=\{\sigma\}$.
\end{proof}

Now we prove the main theorem of this section.
\vskip 0.2cm
{\bf Theorem C.}
Let $\Lambda$ be a nontrivial multisingular hyperbolic chain transitive set. If $\Lambda$ has the intrinsic shadowing property, then $\Lambda$ contains no singularity.
\vskip 0.2cm
\begin{proof}
Let $N_{B(\Lambda)}=\Delta^s\oplus\Delta^u$ be the dominated splitting in the definition of multisingular hyperbolicity. Since $\Lambda$ is chain transitive we know that there is $i$ such that $\Delta^s(e)=i$ for all $e\in B(\Lambda)$.

Suppose there is a singularity $\sigma$ in $\Lambda$. Since $\Lambda$ is chain transitive, by Lemma \ref{lem1} we obtain $W^s(\sigma)\setminus\{\sigma\}\neq\emptyset$ and $W^u(\sigma)\setminus\{\sigma\}\neq\emptyset$. By Proposition \ref{lem42} we know that $\sigma$ is Lorenz-like. Without loss of generality we assume $\sigma$ is positively Lorenz-like. Since $\Lambda$  has the intrinsic shadowing property, by applying Lemma \ref{lem6}, there is a point $x_0\in (W^s(\sigma)\cap W^u(\sigma)\cap\Lambda)\setminus\{\sigma\}$. By the choice of $U$, there is a dominated splitting $N_{Orb(x_0)}=\Delta^s\oplus\Delta^u$ of index $i$ with respect to $\psi_t$.  Note that $Ind(\sigma)-1=i$ by Proposition \ref{lem42}. By Lemma \ref{lem43},  $W^{ss}(\sigma)\cap\Lambda=\{\sigma\}$. Note that there is a dominated splitting $N_{Orb(x_0)}=\Delta^s\oplus\Delta^u$ from the dominated splitting on $B(\Lambda)$. By Proposition \ref{singularhyperbolicity}, $\Gamma=Orb(x_0)\cup\{\sigma\}$ is positively singular hyperbolic. Since $\Lambda$ has the intrinsic shadowing property, it follows that $\Gamma$ has the shadowing property. This contradicts Proposition \ref{mainprop}, proving Theorem C.
\end{proof}

\end{document}